\newcommand{\be}{\begin{eqnarray}}
\newcommand{\ee}{\end{eqnarray}}
\newcommand{\by}{\begin{eqnarray*}}
\newcommand{\ey}{\end{eqnarray*}}
\newcommand{\bn}{\begin{enumerate}}
\newcommand{\en}{\end{enumerate}}
\newcommand{\ei}{\end{itemize}}
\newtheorem{theorem}{Theorem}
\newtheorem{lemma}[theorem]{Lemma}
\newtheorem{proposition}[theorem]{Proposition}
\renewcommand{\theequation}{\arabic{section}.\arabic{equation}}
\numberwithin{equation}{section}
\begin{document}
\date{}
\title{\bf Convergence rate of the Smoluchowski-Kramers approximation for diffusions with jumps  \footnote{This work was supported by 
the Natural Science Foundation of Jiangsu Province, BK20230899 and
the National Natural Science Foundation of China, 11771207.
}}
\author{ Chungang Shi\\
\texttt{{\scriptsize School of Mathematics and Statistics, Nanjing University of Science and Technology,
Nanjing, 210023, P. R. China}}}\maketitle
\begin{abstract}
In the paper, the Kolmogorov distance is used to study the Smoluchowski-Kramers approximation for diffusions with jumps. The convergence rate is derived by Malliavin calculus.
\end{abstract}

\textbf{Key Words:} Smoluchowski-Kramers approximation;  Kolmogorov distance; Malliavin calculus; Diffusions with jumps.


\section{Introduction}
  \setcounter{equation}{0}
  \renewcommand{\theequation}
{1.\arabic{equation}}
The motion of a particle with mass $\epsilon$ in a fluid subjected to friction and stochastic external forces satisfies Newton's equations of motion which may be described as~\cite{Ne}
\begin{eqnarray}\label{equ:main}
\epsilon\ddot{X}_{t}^{\epsilon}=b(t,X_{t}^{\epsilon})-\alpha\dot{X}_{t}^{\epsilon}+\dot{B}_{t}, \quad X_{0}^{\epsilon}=x_{0}, \dot{X}_{0}^{\epsilon}=v_{0},
\end{eqnarray}
where $\alpha\dot{X}_{t}^{\epsilon}$ is a linear dissipation term and $\dot{B}_{t}$ is a stochastic force that is a white noise. Formally, let $\epsilon\to0$ on both sides of equation (\ref{equ:main}), $X_{t}^{\epsilon}$ may be approximated by the solution of the following first order equation
\begin{eqnarray*}
\alpha\dot{X}_{t}=b(t,X_{t})+\dot{B}_{t},\quad X_{0}=x_{0},
\end{eqnarray*}
in the sense that
\begin{eqnarray*}
\lim_{\epsilon\to0}\mathbb{P}\big\{\max_{0\leq t\leq T}\|X_{t}^{\epsilon}-X_{t}\|>\delta\big\}=0.
\end{eqnarray*}
This is called classical Smoluchowski-Kramers approximation~\cite{Sm,K}. There are lots of works on the Smoluchowski-Kramers approximation with Gaussian noise~\cite{CF1,CF3,HMVW,SMD}. Recently, Tan and Dung~\cite{TD} derive the an explicit Berry-Esseen error bound for the rate of convergence in the Kolmogorov distance based on the Malliavin calculus. Son~\cite{S} study a stochastic differential equation driven by the fractional Brownian motion and derive the convergence rate  by using the same method.

In the paper, we consider the following stochastic differential equation driven by the Brownian motion and Poisson jumps with finite intensity measure $\nu$,
\begin{eqnarray}\label{equ:1.1}
\epsilon\ddot{X}_{t}^{\epsilon}&=&b(t,X_{t}^{\epsilon})-\dot{X}_{t}^{\epsilon}+\sigma(t,X_{t}^{\epsilon})dB_{t}+\int_{\mathbb{R}_{0}}c(X_{t-}^{\epsilon},z)\tilde{N}(dt,dz),\label{equ:1.1}\\
X_{0}^{\epsilon}&=&x_{0},\quad \dot{X}_{0}^{\epsilon}=y_{0},\nonumber
\end{eqnarray}
where $\mathbb{R}_{0}=\mathbb{R}\setminus\{0\}$. $B_{t}$ is a standard Brownian motion and $\tilde{N}$ is a compensated Poisson random measure with the intensity measure $\nu$ which is independent of $B_{t}$. By the similar proof as zhang~\cite{Z}, we may show that
\begin{equation*}
\lim_{\epsilon\to0}\mathbb{P}\{\|X^{\epsilon}-X\|_{D(0,T,\mathbb{R}^{d})}>\epsilon\}=0,
\end{equation*}
where $D(0,T,\mathbb{R}^{d})$ denotes the Banach space of c${\rm \grave{a}}$dl${\rm \grave{a}}$g functions on $[0,T]$ with the supremum norm~\cite{CHZ} 
and $X$ satisfies the following stochastic differential equation
\begin{equation}\label{equ:1.11}
\dot{X}_{t}=b(t,X_{t})+\sigma(t,X_{t})dB_{t}+\int_{\mathbb{R}_{0}}c(X_{t-},z)\tilde{N}(dt,dz).
\end{equation}

Our main goal is to obtain the convergence rate in the Kolmogorov distance between the laws of $X_{t}^{\epsilon}$ and $X_{t}$. 
Based on the Malliavin calculus, we derive the following convergence result
\begin{eqnarray}
\sup_{x\in\mathbb{R}^{d}}|\mathbb{P}(X_{t}^{\epsilon}\leq x)-\mathbb{P}(X_{t}\leq x)|\leq Ct^{-1}\sqrt{\epsilon},
\end{eqnarray}
where $C$ is a positive constant.

The rest of the paper is organized as follows. In section 2, we give some assumptions and some basic concepts of Malliavin calculus that is our main technique to derive our main result. The section 3 gives the proof of the main result.

\section{Preliminary}
  \setcounter{equation}{0}
  \renewcommand{\theequation}
{2.\arabic{equation}}

Let $(\Omega, \mathcal{F}, \mathbb{P})$ be a complete probability space with filter $\mathcal{F}_{t}$ is generated by $W$ and $N$, that is $\mathcal{F}_{t}=\sigma\{W_{s},N((0,s]\times A): 0\leq s\leq t, A\in \mathcal{B}(\mathbb{R}_{0})\}$.
Let $\{p(t)\}_{t\geq0}$ be $\sigma$-finite, stationary, $\mathcal{F}_{t}$-adapted Poisson point process and take value in measurable space $(\mathbb{R}_{0},\mathcal{B}(\mathbb{R}_{0}))$. Define a Poisson random measure which is induced by $\{p(t)\}$
\begin{eqnarray*}
N((s_{1},s_{2}], U)=\Sigma_{s\in(s_{1},s_{2}]}I_{U}(p(s)),
\end{eqnarray*}
for any $U\in \mathcal{B}(\mathbb{R}_{0})$, and the corresponding compensated Poisson random measure is
\begin{eqnarray*}
\tilde{N}(dt,dx)=N(dt,dx)-\nu(dx)dt,
\end{eqnarray*}
where $\nu$ is the intensity measure of $N$. As the main tool of the paper is the Malliavin calculus, we give some basic elements on Malliavin calculus in this part~\cite{N}.
Let $B=(B_{t})_{t\geq0}$ be a one-dimensional Brownian motion which is independent of $N$. For $h\in L^{1}(\mathbb{R}\times\mathbb{R}_{0}, dt\nu(dz))$, we make the following notation
\begin{eqnarray*}
N(h)=\int_{\mathbb{R}_{+}}\int_{\mathbb{R}_{0}}h(t,z)N(dt,dz)
\end{eqnarray*}
and for $g\in L^{2}(0,T)$, we denote by $B(g)$ the Wiener integral
\begin{eqnarray*}
B(g)=\int_{0}^{T}g(t)dB(t).
\end{eqnarray*}
We denote by $C_{0}^{0,2}(\mathbb{R}_{+}\times\mathbb{R}_{0})$ the set of continuous functions $h:\mathbb{R}_{+}\times\mathbb{R}_{0}\to\mathbb{R}$ that are twice continuously differentiable on $\mathbb{R}_{0}$ with compact support. We consider the set $\mathcal{S}$ of cylindrical random variables of the form 
\begin{eqnarray*}
F=\phi(B(g_{1}),\cdots,B(g_{k}),N(h_{1}),\cdots,N(h_{m})),
\end{eqnarray*}
where $\phi\in C_{0}^{2}(\mathbb{R}^{m+k}), g_{i}\in L^{2}(\mathbb{R}_{+})$ and $h_{i}\in C_{0}^{0,2}(\mathbb{R}_{+}\times\mathbb{R}_{0})$. The set $\mathcal{S}$ is dense in $L^{2}(\Omega)$. If $F$ belongs to $\mathcal{S}$, we define the Malliavin derivatives $D^{B}F$ and $D^{N}F$, considering the random variables $N(h_{i})$ and $B(g_{i})$, respectively, as constants. That is, for almost all $(t,z)\in\mathbb{R}_{+}\times\mathbb{R}_{0}$,  
\begin{eqnarray*}
D_{t}^{B}F=\sum_{i=1}^{k}\frac{\partial\phi}{\partial x_{i}}(B(g_{1}),\cdots,B(g_{k}),N(h_{1}),\cdots,N(h_{m}))g_{i}(t)
\end{eqnarray*}
and
\begin{eqnarray*}
D_{t,z}^{N}F=\sum_{i=k+1}^{k+m}\frac{\partial\phi}{\partial x_{i}}(B(g_{1}),\cdots,B(g_{k}),N(h_{1}),\cdots,N(h_{m}))\partial_{z}h_{i}(t,z),
\end{eqnarray*}
for almost all $(t,z)\in \mathbb{R}_{+}\times\mathbb{R}_{0}$. The operators $D^{B}$ and $D^{N}$ are closable and they can be extended to the space $\mathbb{D}^{2}$ defined as the closure of $\mathcal{S}$ with respect to the seminorm 
\begin{eqnarray*}
|||F|||_{2}^{2}=\mathbb{E}|F|^{2}+\mathbb{E}\|D^{B}F\|_{H}^{2}+\mathbb{E}\|D^{N}F\|_{N}^{2}.
\end{eqnarray*}
Here $\|\cdot\|_{H}$ and $\|\cdot\|_{N}$ denote the norm as follows respectively
\begin{eqnarray*}
\|u\|_{H}^{2}=\int_{0}^{T}|u_{t}|^{2}dt,\quad \|v\|_{N}^{2}=\int_{\mathbb{R}_{+}}\int_{\mathbb{R}_{0}}|v(s,z)|^{2}N(ds,dz).
\end{eqnarray*}
We define the space $\mathbb{L}_{a}^{1,N}$ as the space of predictable processes $u=(u(t,z))_{t\geq0,z\in\mathbb{R}_{0}}$ such that, for almost all $(t,z)$, we have that $u(t,z)\in\mathbb{D}^{1,N},\partial_{z}u(t,z)$ exists, and 
\begin{eqnarray*}
\|u\|_{1,N}^{2}=\int_{\mathbb{R}_{+}}\int_{\mathbb{R}_{0}}[\mathbb{E}|u(s,z)|^{2}+\mathbb{E}|\partial_{z}u(s,z)|^{2}+\mathbb{E}\|Du(s,z)\|_{N}^{2}]\nu(dz)ds<\infty.
\end{eqnarray*}
Then we have the following result~\cite{N}
\begin{proposition}
If $u$ belongs to $\mathbb{L}_{a}^{1,N}$ then the stochastic integrals
\begin{eqnarray*}
I=\int_{\mathbb{R}_{+}}\int_{\mathbb{R}_{0}}u(t,z)\tilde{N}(dt,dz)
\end{eqnarray*}
belongs to $\mathbb{D}^{1,N}$ and for almost all $(\tau,\alpha)\in\mathbb{R}_{+}\times\mathbb{R}_{0}$,
\begin{eqnarray*}
D_{\tau,\alpha}^{N}I=\partial_{z}u(\tau,\alpha)+\int_{\mathbb{R}_{+}}\int_{\mathbb{R}_{0}}D_{\tau,\alpha}u(t,z)\tilde{N}(dt,dz).
\end{eqnarray*}
\end{proposition}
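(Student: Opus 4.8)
The plan is to establish this commutation relation between the jump Malliavin derivative $D^{N}$ and the compensated Poisson integral by the standard two-step scheme: first verify the identity on a dense class of elementary integrands, on which every quantity is an explicit cylindrical functional, and then pass to the limit using that $D^{N}$ is closed. Two elementary facts about $D_{\tau,\alpha}^{N}$ on the cylindrical class $\mathcal{S}$ do all the work. Since $D_{\tau,\alpha}^{N}F=\sum_{i}\partial_{x_{i}}\phi(\cdots)\,\partial_{z}h_{i}(\tau,\alpha)$ involves only the ordinary partial derivative $\partial_{z}$, the operator is a genuine derivation: $D_{\tau,\alpha}^{N}(FG)=F\,D_{\tau,\alpha}^{N}G+G\,D_{\tau,\alpha}^{N}F$ and $D_{\tau,\alpha}^{N}\psi(F)=\psi'(F)\,D_{\tau,\alpha}^{N}F$ for $F,G\in\mathcal{S}$ and $\psi\in C^{1}$. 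Moreover, for $h\in C_{0}^{0,2}(\mathbb{R}_{+}\times\mathbb{R}_{0})$ one has $D_{\tau,\alpha}^{N}\tilde{N}(h)=\partial_{z}h(\tau,\alpha)$, because $\tilde{N}(h)=N(h)-\int_{\mathbb{R}_{+}}\int_{\mathbb{R}_{0}}h(t,z)\nu(dz)dt$, the compensator is deterministic and hence killed by $D^{N}$, while $D_{\tau,\alpha}^{N}N(h)=\partial_{z}h(\tau,\alpha)$ by definition.

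Next I would prove the formula for an elementary integrand $u(t,z)=\sum_{j=1}^{n}F_{j}\,g_{j}(t)\,h_{j}(z)$, where the $g_{j}$ are smooth functions supported in disjoint intervals $(t_{j-1},t_{j})$, each $F_{j}\in\mathcal{S}$ is $\mathcal{F}_{t_{j-1}}$-measurable, and each $h_{j}$ is $C^{2}$ with compact support in $\mathbb{R}_{0}$; finite linear combinations of such processes are dense in $\mathbb{L}_{a}^{1,N}$. For such $u$ we have $I=\sum_{j}F_{j}\,\tilde{N}(g_{j}h_{j})$, so the derivation property and the two facts give
\[
D_{\tau,\alpha}^{N}I=\sum_{j}F_{j}\,g_{j}(\tau)\,\partial_{\alpha}h_{j}(\alpha)\;+\;\sum_{j}\big(D_{\tau,\alpha}^{N}F_{j}\big)\,\tilde{N}(g_{j}h_{j}).
\]
The first sum is $\partial_{z}u(\tau,\alpha)$. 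In the second sum, $D_{\tau,\alpha}^{N}F_{j}$ vanishes for $\tau>t_{j-1}$ and is $\mathcal{F}_{t_{j-1}}$-measurable otherwise, so $(t,z)\mapsto D_{\tau,\alpha}^{N}u(t,z)=\sum_{j}(D_{\tau,\alpha}^{N}F_{j})\,g_{j}(t)\,h_{j}(z)$ is an admissible predictable integrand against $\tilde{N}$ and its integral equals the second sum; hence $D_{\tau,\alpha}^{N}I=\partial_{z}u(\tau,\alpha)+\int_{\mathbb{R}_{+}}\int_{\mathbb{R}_{0}}D_{\tau,\alpha}u(t,z)\,\tilde{N}(dt,dz)$ for elementary $u$.

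For general $u\in\mathbb{L}_{a}^{1,N}$, I would take elementary $u_{n}$ with $\|u_{n}-u\|_{1,N}\to0$ and invoke the It\^o isometry $\mathbb{E}\big|\int_{\mathbb{R}_{+}}\int_{\mathbb{R}_{0}}v\,\tilde{N}\big|^{2}=\mathbb{E}\int_{\mathbb{R}_{+}}\int_{\mathbb{R}_{0}}|v|^{2}\nu(dz)dt$ to get $I_{n}\to I$ in $L^{2}(\Omega)$, $\partial_{z}u_{n}\to\partial_{z}u$, and $\int_{\mathbb{R}_{+}}\int_{\mathbb{R}_{0}}D_{\tau,\alpha}u_{n}(t,z)\tilde{N}(dt,dz)\to\int_{\mathbb{R}_{+}}\int_{\mathbb{R}_{0}}D_{\tau,\alpha}u(t,z)\tilde{N}(dt,dz)$, the latter two in $L^{2}(\Omega\times\mathbb{R}_{+}\times\mathbb{R}_{0})$, all three bounds being controlled by $\|u_{n}-u\|_{1,N}^{2}$; since $D^{N}$ is closed, letting $n\to\infty$ in the elementary identity gives $I\in\mathbb{D}^{1,N}$ and the asserted formula. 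I expect the genuine work to lie in this last passage to the limit: verifying density of the elementary class in $\mathbb{L}_{a}^{1,N}$, keeping straight the compatibility between the norm $\|\cdot\|_{N}$ (which integrates against $N$) and the $\nu$-integrated quantities in $\|\cdot\|_{1,N}$, and checking that $(t,z)\mapsto D_{\tau,\alpha}u_{n}(t,z)$ is an admissible integrand so that the isometry applies to the derivative term — the elementary computation itself being essentially bookkeeping.
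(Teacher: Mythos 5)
The paper offers no proof of this proposition at all: it is quoted verbatim from Nualart--Nualart \cite{N}, so there is nothing in-paper to compare against, and your density-plus-closability scheme (verify the identity on elementary adapted integrands via the derivation property of $D^{N}$ and $D^{N}_{\tau,\alpha}\tilde{N}(h)=\partial_{z}h(\tau,\alpha)$, then pass to the limit) is precisely the standard argument that the cited reference carries out. The sketch is sound; the one point you flag but leave open is indeed where the substance lies, namely that closability of $D^{N}$ is taken with respect to $\|\cdot\|_{N}$, which integrates against the random measure $N(ds,dz)$, whereas your convergence bounds are in the $\nu(dz)\,dt$-integrated norm, and since $(\tau,\alpha)\mapsto D^{N}_{\tau,\alpha}I_{n}$ is not predictable (its integral term looks into the future of $\tau$) the compensation formula does not convert one into the other for free.
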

Similarly, in the case of Brownian motion~\cite{N1}, we have
\begin{eqnarray*}
D_{r}^{B}\Big(\int_{0}^{T}u(s)ds\Big)&=&\int_{r}^{T}D_{r}^{B}u(s)ds, \nonumber\\
D_{r}^{B}\Big(\int_{0}^{T}u(s)dB(s)\Big)&=&u(r)+\int_{r}^{T}D_{r}^{B}u(s)dB(s), \quad0\leq r\leq T.
\end{eqnarray*}

\begin{lemma}[\cite{A}]{\rm(Kunita's first inequality)}\label{KN}
For any $p\geq 2$, there exists a constant $C_{p}>0$ such that
\begin{eqnarray*}
\mathbb{E}\sup_{0\leq s\leq t}\Big|\int_{0}^{s}\int_{Z}H(\tau,z)\tilde{N}(d\tau,dz)\Big|^{p}&\leq& C_{p}\Big\{\mathbb{E}\Big[\Big(\int_{0}^{s}\int_{Z}|H(\tau,z)|^{2}\nu(dz)d\tau\Big)^{\frac{p}{2}}\Big]\\
&&+\mathbb{E}\int_{0}^{s}\int_{Z}|H(\tau,z)|^{p}\nu(dz)d\tau\Big\}.
\end{eqnarray*}
\end{lemma}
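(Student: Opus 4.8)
This is the standard Kunita inequality for compensated Poisson integrals, and the plan is to reproduce the classical argument based on It\^o's formula for the power function together with Doob's $L^p$ maximal inequality (see \cite{A}). Write $Y_s:=\int_0^s\int_Z H(\tau,z)\tilde{N}(d\tau,dz)$, a c\`adl\`ag (local) martingale. I would first localize, replacing $t$ by $t\wedge\sigma_n$ for stopping times $\sigma_n\uparrow\infty$ chosen so that $Y$, $\int_0^t\int_Z|H|^2\nu(dz)d\tau$ and $\int_0^t\int_Z|H|^p\nu(dz)d\tau$ are all bounded, so that every expectation below is finite; the localization is removed at the end by monotone convergence, checking that the constant does not depend on $n$. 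The case $p=2$ is just the It\^o isometry for Poisson integrals, so I assume $p>2$.

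Next I would apply It\^o's formula to the $C^2$ function $x\mapsto|x|^p$ and take expectations, which kills the martingale part and leaves
\begin{eqnarray*}
\mathbb{E}|Y_s|^p & = & \mathbb{E}\int_0^s\int_Z\Big(|Y_{\tau-}+H|^p-|Y_{\tau-}|^p-p|Y_{\tau-}|^{p-2}Y_{\tau-}\cdot H\Big)\nu(dz)d\tau.
\end{eqnarray*}
Then the elementary inequality $\big||a+b|^p-|a|^p-p|a|^{p-2}a\cdot b\big|\le C_p\big(|a|^{p-2}|b|^2+|b|^p\big)$, valid for $p\ge2$ by a Taylor expansion together with $|a+\theta b|^{p-2}\le C_p(|a|^{p-2}+|b|^{p-2})$ on the segment, gives
\begin{eqnarray*}
\mathbb{E}|Y_s|^p & \le & C_p\,\mathbb{E}\int_0^s\int_Z|Y_{\tau-}|^{p-2}|H|^2\nu(dz)d\tau+C_p\,\mathbb{E}\int_0^s\int_Z|H|^p\nu(dz)d\tau.
\end{eqnarray*}

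For the mixed term I would bound $|Y_{\tau-}|^{p-2}$ by $\sup_{\tau\le s}|Y_\tau|^{p-2}$, pull it out of the integral, and apply Young's inequality with conjugate exponents $\tfrac{p}{p-2}$ and $\tfrac{p}{2}$ to get, for any $\eta>0$,
\begin{eqnarray*}
\mathbb{E}\int_0^s\int_Z|Y_{\tau-}|^{p-2}|H|^2\nu(dz)d\tau & \le & \eta\,\mathbb{E}\sup_{\tau\le s}|Y_\tau|^p+C_{p,\eta}\,\mathbb{E}\Big(\int_0^s\int_Z|H|^2\nu(dz)d\tau\Big)^{p/2}.
\end{eqnarray*}
Taking $s=t$, combining the three displays with Doob's inequality $\mathbb{E}\sup_{s\le t}|Y_s|^p\le\big(\tfrac{p}{p-1}\big)^p\,\mathbb{E}|Y_t|^p$, and choosing $\eta$ small enough that the coefficient of $\mathbb{E}\sup_{s\le t}|Y_s|^p$ is $<\tfrac12$, I can absorb that term into the left-hand side and obtain the asserted bound; finally I let $\sigma_n\uparrow\infty$.

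The one delicate point is precisely this absorption step: it is legitimate only because the localization in the first paragraph guarantees $\mathbb{E}\sup_{s\le t}|Y_s|^p<\infty$ before the limit is taken, and one must verify that the resulting $C_p$ is independent of $n$. An alternative route that I could take avoids It\^o's formula: apply the Burkholder--Davis--Gundy inequality to write $\mathbb{E}\sup_{s\le t}|Y_s|^p\le C_p\,\mathbb{E}\big(\int_0^t\int_Z|H|^2N(d\tau,dz)\big)^{p/2}$, split $N=\tilde{N}+\nu(dz)d\tau$, and induct on $p$ (halving the exponent at each step and handling the range $1\le q\le2$ via convexity of $x\mapsto x^q$ and the auxiliary bound $\mathbb{E}\big|\int_0^t\int_Z G\tilde{N}(d\tau,dz)\big|^q\le C_q\,\mathbb{E}\int_0^t\int_Z|G|^q\nu(dz)d\tau$); this is slightly more efficient for large $p$ but requires the same moment bookkeeping.
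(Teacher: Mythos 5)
The paper offers no proof of this lemma: it is quoted verbatim from Applebaum \cite{A} (Theorem 4.4.23 there), and your argument --- It\^o's formula for $|x|^p$, the second-order Taylor bound on $|a+b|^p-|a|^p-p|a|^{p-2}ab$, Young's inequality, Doob's maximal inequality, and absorption after localization --- is precisely the standard proof given in that reference, with the one genuinely delicate point (finiteness of $\mathbb{E}\sup_{s\le t}|Y_s|^p$ before absorbing) correctly identified and handled. Your proposal is correct and consistent with the cited source.
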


Next we give some assumptions on coefficients.

$(\mathbf{H_{1}})$ $b, \sigma: \mathbb{R_{+}}\times\mathbb{R}\to \mathbb{R}, c: \mathbb{R_{+}}\times\mathbb{R}\times\mathbb{R}_{0}\to \mathbb{R}$ are Lipschitz continuous and have linear growth, that is, for any $x, y\in\mathbb{R}$, $0\leq t\leq T$,
\begin{eqnarray*}
|b(t,x)-b(t,y)|^{2}+|\sigma(t,x)-\sigma(t,y)|^{2}\leq K|x-y|^{2},
\end{eqnarray*}
\begin{eqnarray*}
\int_{\mathbb{R}_{0}}|c(x,z)-c(y,z)|^{2}\nu(dz)\leq K|x-y|^{2},
\end{eqnarray*}
and 
\begin{eqnarray*}
|b(t,x)|^{2}+|\sigma(t,x)|^{2}+\int_{\mathbb{R}_{0}}|c(x,z)|^{2}\nu(dz)\leq K(1+|x|^{2}).
\end{eqnarray*}

$(\mathbf{H_{2}})$ $b, \sigma, c(\cdot,z)\in C^{2}( \mathbb{R}, \mathbb{R})$ for any $z\in \mathbb{R}_{0}$, and $b, \sigma$ have bounded derivatives. Furthermore, $c(\cdot,\cdot)$ satisfies, for any $x\in\mathbb{R}, p\geq 2$,
\begin{eqnarray*}
&&\int_{\mathbb{R}_{0}}|\partial_{x}^{k}c(x,z)|^{p}\nu(dz)\leq K,\quad k=1,2,\\
&&\int_{\mathbb{R}_{0}}|\partial_{x}c(x,z)-\partial_{x}c(y,z)|^{2}\nu(dz)\leq K|x-y|^{2},
\end{eqnarray*}
and 
$|\partial_{z}c(x,z)|\leq K(1+|x|)$, $|\partial_{z}c(x,z)-\partial_{z}c(y,z)|\leq K|x-y|$.

\section{Main results}
 \setcounter{equation}{0}
 \renewcommand{\theequation}
{3.\arabic{equation}}

Under the assumption $(\mathbf{H_{1}})$ and $(\mathbf{H_{2}})$, by the Theorem 6.2.3 of~\cite{A} or the theorem 11.4.2 of~\cite{N}, 
the equation (\ref{equ:1.1}) and 
(\ref{equ:1.11}) have a unique strong solution, that is
\begin{eqnarray}\label{Sr}
X_{t}^{\epsilon}&=&x_{0}+\int_{0}^{t}Y_{s}^{\epsilon}ds\nonumber\\
Y_{t}^{\epsilon}&=&y_{0}+\frac{1}{\epsilon}\int_{0}^{t}(b(s,X_{s}^{\epsilon})-Y_{s}^{\epsilon})ds+\frac{1}{\epsilon}\int_{0}^{t}\sigma(s,X_{s}^{\epsilon})dB_{s}\nonumber\\
&&+\frac{1}{\epsilon}\int_{0}^{t}\int_{\mathbb{R}_{0}}c(X_{s-}^{\epsilon},z)\tilde{N}(ds,dz),
\end{eqnarray}
and 
\begin{eqnarray}\label{equ:3.4}
X_{t}=x_{0}+\int_{0}^{t}b(s,X_{s})ds+\int_{0}^{t}\sigma(s,X_{s})dB_{s}+\int_{0}^{t}\int_{\mathbb{R}_{0}}c(X_{s-},z)\tilde{N}(ds,dz).
\end{eqnarray}
By (\ref{equ:1.1}), the constant variation formula and integration by parts~\cite{F}, the equation (\ref{Sr}) is equivalent to the following 
\begin{eqnarray*}\label{equ:3.5}
Y_{t}^{\epsilon}&=&e^{-\frac{1}{\epsilon}t}y_{0}+\frac{1}{\epsilon}\int_{0}^{t}e^{-\frac{1}{\epsilon}(t-s)}b(s,X_{s}^{\epsilon})ds+\frac{1}{\epsilon}\int_{0}^{t}e^{-\frac{1}{\epsilon}(t-s)}\sigma(s,X_{s}^{\epsilon})dB_{s}\nonumber\\
&&+\frac{1}{\epsilon}\int_{0}^{t}\int_{\mathbb{R}_{0}}e^{-\frac{1}{\epsilon}(t-s)}c(X_{s-},z)\tilde{N}(ds,dz),
\end{eqnarray*}
and
\begin{eqnarray}\label{Xequ:3.6}
X_{t}^{\epsilon}&=&x_{0}+\epsilon y_{0}(1-e^{-\frac{1}{\epsilon}t})+\int_{0}^{t}b(s,X_{s}^{\epsilon})ds-\int_{0}^{t}e^{-\frac{1}{\epsilon}(t-s)}b(s,X_{s}^{\epsilon})ds\nonumber\\
&&+\int_{0}^{t}\sigma(s,X_{s}^{\epsilon})dB_{s}-\int_{0}^{t}e^{-\frac{1}{\epsilon}(t-s)}\sigma(s,X_{s}^{\epsilon})dB_{s}\\
&&+\int_{0}^{t}\int_{\mathbb{R}_{0}}c(X_{s-}^{\epsilon},z)\tilde{N}(ds,dz)-\int_{0}^{t}\int_{\mathbb{R}_{0}}e^{-\frac{1}{\epsilon}(t-s)}c(X_{s-}^{\epsilon},z)\tilde{N}(ds,dz).\nonumber
\end{eqnarray}
Then the following result holds.
\begin{lemma}\label{lem:3.1}
Under the assumptions $(\mathbf{H_{1}})$ and $(\mathbf{H_{2}})$, for any $p\geq 2$ and $\epsilon\in(0,1)$, we have
\begin{eqnarray}
&&\mathbb{E}\sup_{0\leq t\leq T}|X_{t}^{\epsilon}|^{p}\leq C.\label{equ:b3.5}\\
&&\sup_{0\leq t\leq T}\mathbb{E}|X_{t}^{\epsilon}-X_{t}|^{p}\leq C\epsilon^{\frac{p}{2}}.
\end{eqnarray}
\end{lemma}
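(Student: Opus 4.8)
The plan is to prove the two bounds in turn, using the integral representation (\ref{Xequ:3.6}) for $X_t^\epsilon$ together with the mild-form representations of $X_t^\epsilon$ and $X_t$, and estimating via the Burkholder--Davis--Gundy inequality for the Brownian part and Kunita's first inequality (Lemma \ref{KN}) for the jump part. First, for (\ref{equ:b3.5}), I would take $p$-th moments on both sides of (\ref{Xequ:3.6}). The deterministic terms $x_0$ and $\epsilon y_0(1-e^{-t/\epsilon})$ are bounded uniformly in $\epsilon\in(0,1)$ and $t\in[0,T]$. For the drift terms I would use H\"older's inequality in time and the linear growth of $b$ from $(\mathbf{H_1})$; the key point is that $\int_0^t e^{-(t-s)/\epsilon}\,ds \le \epsilon \le 1$, so the ``subtracted'' exponential terms are handled the same way and contribute no negative power of $\epsilon$. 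For the Brownian stochastic integrals I would apply BDG and then linear growth; for the compensated-Poisson integrals I would apply Lemma \ref{KN} with exponent $p$, using the linear-growth hypothesis on $\int_{\mathbb R_0}|c(x,z)|^2\nu(dz)$ for the quadratic-variation term and an analogous $L^p$ bound (which follows from $(\mathbf{H_2})$, or can be reduced to the $L^2$ bound plus boundedness of the solution) for the jump-size term. Collecting everything, one arrives at
\[
\mathbb E\sup_{0\le s\le t}|X_s^\epsilon|^p \le C + C\int_0^t \mathbb E\sup_{0\le r\le s}|X_r^\epsilon|^p\,ds,
\]
and Gronwall's inequality closes the estimate. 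One subtlety: applying Gronwall requires the left-hand side to be finite a priori; this is guaranteed by the existence theorem cited before the lemma (Theorem 6.2.3 of \cite{A} / Theorem 11.4.2 of \cite{N}), or one works with stopping times $\tau_n=\inf\{t: |X_t^\epsilon|\ge n\}$ and passes to the limit.

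For the second bound, I would subtract (\ref{equ:3.4}) from (\ref{Xequ:3.6}) and organize the difference $X_t^\epsilon - X_t$ into two groups of terms: (i) ``Lipschitz'' differences of the form $\int_0^t [b(s,X_s^\epsilon)-b(s,X_s)]\,ds$ and the analogous $\sigma$- and $c$-differences, which after moments, H\"older/BDG/Kunita, and $(\mathbf{H_1})$ are bounded by $C\int_0^t \sup_{r\le s}\mathbb E|X_r^\epsilon-X_r|^p\,ds$; and (ii) the ``small'' terms carrying the factor $e^{-(t-s)/\epsilon}$, namely $\epsilon y_0(1-e^{-t/\epsilon})$, $\int_0^t e^{-(t-s)/\epsilon} b(s,X_s^\epsilon)\,ds$, $\int_0^t e^{-(t-s)/\epsilon}\sigma(s,X_s^\epsilon)\,dB_s$, and $\int_0^t\int_{\mathbb R_0} e^{-(t-s)/\epsilon} c(X_{s-}^\epsilon,z)\,\tilde N(ds,dz)$. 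These are exactly the terms that produce the $\epsilon^{p/2}$ order. The deterministic term is $O(\epsilon^p)=O(\epsilon^{p/2})$; the drift term is bounded using $\int_0^t e^{-(t-s)/\epsilon}\,ds\le\epsilon$ and linear growth plus (\ref{equ:b3.5}), giving $O(\epsilon^p)$; the Brownian term, by BDG and then the isometry-type bound $\int_0^t e^{-2(t-s)/\epsilon}\,ds\le \epsilon/2$, gives $\mathbb E|\cdots|^p \le C(\epsilon/2)^{p/2}\sup_s\mathbb E(1+|X_s^\epsilon|^2)^{p/2}=O(\epsilon^{p/2})$; the jump term, by Kunita's inequality, has quadratic-variation contribution $\big(\int_0^t e^{-2(t-s)/\epsilon}\int_{\mathbb R_0}|c|^2\nu(dz)\,ds\big)^{p/2}=O(\epsilon^{p/2})$ and $L^p$-jump contribution $\int_0^t e^{-p(t-s)/\epsilon}\int_{\mathbb R_0}|c|^p\nu(dz)\,ds=O(\epsilon)=O(\epsilon^{p/2})$. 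Combining (i) and (ii),
\[
\sup_{0\le r\le t}\mathbb E|X_r^\epsilon-X_r|^p \le C\epsilon^{p/2} + C\int_0^t \sup_{0\le r\le s}\mathbb E|X_r^\epsilon-X_r|^p\,ds,
\]
and Gronwall yields $\sup_{0\le t\le T}\mathbb E|X_t^\epsilon-X_t|^p\le C\epsilon^{p/2}$.

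I expect the main obstacle to be the bookkeeping for the exponential-weighted stochastic integrals: one must be careful that every occurrence of $e^{-(t-s)/\epsilon}$ is integrated against $ds$ (or $e^{-2(t-s)/\epsilon}$ against $ds$ after BDG/Kunita), so that it yields a genuine power of $\epsilon$ rather than the dangerous $1/\epsilon$ prefactor that appears in the $Y_t^\epsilon$ equation; the representation (\ref{Xequ:3.6}) is precisely arranged so that this works, since the $1/\epsilon$ in $Y^\epsilon$ has already been absorbed by one time integration. A secondary point requiring care is the $L^p$ (as opposed to $L^2$) control of the jump coefficient $c$ needed for Kunita's inequality; this is where the hypothesis $\int_{\mathbb R_0}|\partial_x^k c(x,z)|^p\nu(dz)\le K$ in $(\mathbf{H_2})$, combined with the linear-growth bound and (\ref{equ:b3.5}), does the work. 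The rest is a standard Gronwall argument, routine once the estimates above are assembled.
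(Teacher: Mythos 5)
Your proposal follows essentially the same route as the paper: taking $p$-th moments of the representation (\ref{Xequ:3.6}), estimating the drift terms by H\"older, the Brownian integrals by Burkholder--Davis--Gundy, the jump integrals by Kunita's first inequality (Lemma \ref{KN}), exploiting $\int_0^t e^{-(t-s)/\epsilon}\,ds\le\epsilon$ and $\int_0^t e^{-2(t-s)/\epsilon}\,ds\le\epsilon/2$ for the exponentially weighted terms, and closing with Gronwall; the same decomposition into Lipschitz differences and $O(\epsilon^{p/2})$ remainder terms is used for the second bound. Your additional remarks on the a priori finiteness needed for Gronwall and on the $L^p$ control of $c$ for Kunita's inequality are sensible refinements of points the paper passes over silently.
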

\begin{proof}
By (\ref{Xequ:3.6}), the H${\rm \ddot{o}}$lder inequality, $(\mathbf{H_{1}})$, the Burkh${\rm \ddot{o}}$lder-Davies-Gundy inequality, the Jensen inequality and the Kunita's first inequality, 
\begin{eqnarray}\label{equ:3.9}
&&\mathbb{E}\sup_{0\leq t\leq T}|X_{t}^{\epsilon}|^{p}\leq C_{p}|x_{0}|^{p}+C_{p}|y_{0}|^{p}+C_{p}\mathbb{E}\sup_{0\leq t\leq T}\Big|\int_{0}^{t}b(s,X_{s}^{\epsilon})ds\Big|^{p}\nonumber\\
&&+C_{p}\mathbb{E}\sup_{0\leq t\leq T}\Big|\int_{0}^{t}e^{-\frac{1}{\epsilon}(t-s)}b(s,X_{s}^{\epsilon})ds\Big|^{p}+C_{p}\mathbb{E}\sup_{0\leq t\leq T}\Big|\int_{0}^{t}\sigma(s,X_{s}^{\epsilon})dB_{s}\Big|^{p}\nonumber\\
&&+C_{p}\mathbb{E}\sup_{0\leq t\leq T}\Big|\int_{0}^{t}e^{-\frac{1}{\epsilon}(t-s)}\sigma(s,X_{s}^{\epsilon})dB_{s}\Big|^{p}+C_{p}\mathbb{E}\sup_{0\leq t\leq T}\Big|\int_{0}^{t}\int_{\mathbb{R}_{0}}c(X_{s-}^{\epsilon},z)\tilde{N}(ds,dz)\Big|^{p}\nonumber\\
&&+C_{p}\mathbb{E}\sup_{0\leq t\leq T}\Big|\int_{0}^{t}\int_{\mathbb{R}_{0}}e^{-\frac{1}{\epsilon}(t-s)}c(X_{s-}^{\epsilon},z)\tilde{N}(ds,dz)\Big|^{p}\nonumber\\
&&\leq C_{p}|x_{0}|^{p}+C_{p}|y_{0}|^{p}+C_{p}T^{p-1}\int_{0}^{T}\mathbb{E}|b(s,X_{s}^{\epsilon})|^{p}ds\nonumber\\
&&+C_{p}\mathbb{E}\sup_{0\leq t\leq T}\Big(\int_{0}^{t}e^{-\frac{1}{\epsilon}\frac{p}{p-1}(t-s)}ds\Big)^{p-1}\int_{0}^{t}|b(s,X_{s}^{\epsilon})|^{p}ds+C_{p,T}\int_{0}^{T}\mathbb{E}|\sigma(s,X_{s}^{\epsilon})|^{p}ds\nonumber\\
&&+C_{p}\Big\{\mathbb{E}\Big(\int_{0}^{T}\int_{\mathbb{R}_{0}}|c(X_{s-}^{\epsilon},z)|^{2}\nu(dz)ds\Big)^{\frac{p}{2}}+\mathbb{E}\int_{0}^{T}\int_{\mathbb{R}_{0}}|c(X_{s-}^{\epsilon},z)|^{p}\nu(dz)ds\Big\}\nonumber\\
&&\leq C_{p,T}+C_{p,T}\int_{0}^{T}\mathbb{E}\sup_{0\leq s\leq t}|X_{s}^{\epsilon}|^{p}dt,
\end{eqnarray}
and the Gronwall inequality yields
\begin{eqnarray*}
\mathbb{E}\sup_{0\leq t\leq T}|X_{t}^{\epsilon}|^{p}\leq C_{p,T}.
\end{eqnarray*}
Furthermore, By (\ref{equ:3.4}) and (\ref{Xequ:3.6}), 
\begin{eqnarray*}
&&X_{t}^{\epsilon}-X_{t}\nonumber\\
&&=\epsilon y_{0}(1-e^{-\frac{1}{\epsilon}t})+\int_{0}^{t}(b(s,X_{s}^{\epsilon})-b(s,X_{s}))ds+\int_{0}^{t}(\sigma(s,X_{s}^{\epsilon})-\sigma(s,X_{s}))dB_{s}\nonumber\\
&&+\int_{0}^{t}\int_{\mathbb{R}_{0}}(c(X_{s-}^{\epsilon},z)-c(X_{s-},z))\tilde{N}(ds,dz)-\int_{0}^{t}e^{-\frac{1}{\epsilon}(t-s)}b(s,X_{s}^{\epsilon})ds\nonumber\\
&&-\int_{0}^{t}e^{-\frac{1}{\epsilon}(t-s)}\sigma(s,X_{s}^{\epsilon})dB_{s}-\int_{0}^{t}\int_{\mathbb{R}_{0}}e^{-\frac{1}{\epsilon}(t-s)}c(X_{s-}^{\epsilon},z)\tilde{N}(ds,dz),
\end{eqnarray*}
then by the similar proof to (\ref{equ:3.9}),
\begin{eqnarray*}\label{equ:3.18}
&&\mathbb{E}|X_{t}^{\epsilon}-X_{t}|^{p}\leq C_{p}|\epsilon y_{0}|^{p}+C_{p,T}\int_{0}^{t}\mathbb{E}|b(s,X_{s}^{\epsilon})-b(s,X_{s})|^{p}ds\nonumber\\
&&+C_{p,T}\int_{0}^{t}\mathbb{E}|\sigma(s,X_{s}^{\epsilon})-\sigma(s,X_{s})|^{p}ds+C_{p}\Big\{\mathbb{E}\Big(\int_{0}^{t}\int_{\mathbb{R}_{0}}|X_{s}^{\epsilon}-X_{s}|^{2}ds\Big)^{\frac{p}{2}}+\mathbb{E}\int_{0}^{t}|X_{s}^{\epsilon}-X_{s}|^{p}ds\Big\}\nonumber\\
&&+C_{p}\mathbb{E}\Big(\int_{0}^{t}e^{-\frac{1}{\epsilon}(t-s)}|b(s,X_{s}^{\epsilon})|ds\Big)^{p}+C_{p}\mathbb{E}\Big(\int_{0}^{t}e^{-\frac{2}{\epsilon}(t-s)}|\sigma(s,X_{s}^{\epsilon})|^{2}ds\Big)^{\frac{p}{2}}\nonumber\\
&&+C_{p}\Big\{\mathbb{E}\Big(\int_{0}^{t}\int_{\mathbb{R}_{0}}e^{-\frac{2}{\epsilon}(t-s)}|c(X_{s-}^{\epsilon},z)|^{2}\nu(dz)ds\Big)^{\frac{p}{2}}+\mathbb{E}\int_{0}^{t}\int_{\mathbb{R}_{0}}e^{-\frac{p}{\epsilon}(t-s)}|c(X_{s-}^{\epsilon},z)|^{p}\nu(dz)ds\Big\}\nonumber\\
&&\leq C_{p,T}\epsilon^{\frac{p}{2}}+C_{p,T}\mathbb{E}\int_{0}^{t}|X_{s}^{\epsilon}-X_{s}|^{p}ds,
\end{eqnarray*}
Gronwall's inequality yields
\begin{eqnarray*}
\mathbb{E}|X_{t}^{\epsilon}-X_{t}|^{p}\leq C_{p,T}\epsilon^{\frac{p}{2}}.
\end{eqnarray*}
\end{proof}

Now we consider the Malliavin property of the solution to equation (\ref{equ:1.1}) and (\ref{equ:3.4}). 
\begin{proposition}
Under the assumption $(\mathbf{H_{1}})$ and $(\mathbf{H_{2}})$, the solution to equation (\ref{equ:3.4}) is Malliavin differentiable and satisfies
\begin{eqnarray}\label{equ:3.27}
D_{r}^{B}X_{t}&=&\sigma(r,X_{r})+\int_{r}^{t}b'(s,X_{s})D_{r}^{B}X_{s}ds+\int_{r}^{t}\sigma'(s,X_{s})D_{r}^{B}X_{s}dB_{s}\nonumber\\
&&+\int_{r}^{t}\int_{\mathbb{R}_{0}}\partial_{x}c(X_{s-},z)D_{r}^{B}X_{s}\tilde{N}(ds,dz),\quad r\leq t,\nonumber\\
D_{r}^{B}X_{t}&=&0,\quad r>t,
\end{eqnarray}
and
\begin{eqnarray}\label{equ:3.28}
D_{r,\xi}^{N}X_{t}&=&\partial_{z}c(X_{r},\xi)+\int_{r}^{t}b'(s,X_{s})D_{r,\xi}^{N}X_{s}ds+\int_{r}^{t}\sigma'(s,X_{s})D_{r,\xi}^{N}X_{s}dB_{s}\nonumber\\
&&+\int_{r}^{t}\int_{\mathbb{R}_{0}}\partial_{x}c(X_{s-},z)D_{r,\xi}^{N}X_{s}\tilde{N}(ds,dz),\quad r\leq t,\nonumber\\
D_{r,\xi}^{N}X_{t}&=&0,\quad r>t.
\end{eqnarray}
\end{proposition}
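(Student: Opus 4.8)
The plan is to establish Malliavin differentiability of $X_t$ by a Picard iteration argument, showing that the approximating sequence converges in $\mathbb{D}^{1,B}$ and $\mathbb{D}^{1,N}$, and then to identify the limit of the Malliavin derivatives with the solution of the linear SDEs \eqref{equ:3.27} and \eqref{equ:3.28}. First I would recall the Picard scheme: set $X_t^{(0)}=x_0$ and
\begin{eqnarray*}
X_t^{(n+1)}=x_0+\int_0^t b(s,X_s^{(n)})\,ds+\int_0^t\sigma(s,X_s^{(n)})\,dB_s+\int_0^t\int_{\mathbb{R}_0}c(X_{s-}^{(n)},z)\,\tilde N(ds,dz).
\end{eqnarray*}
By $(\mathbf{H_1})$ each $X_t^{(n)}\in L^p$ uniformly in $n$, and $X_t^{(n)}\to X_t$ in $L^p$ at geometric rate. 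One shows by induction that $X_t^{(n)}\in\mathbb{D}^{1,B}\cap\mathbb{D}^{1,N}$ for every $n$: the inductive step uses Proposition~2.1 for the $\tilde N$-integral and the Brownian chain rule recalled just before Lemma~\ref{KN}, together with the chain rule for $C^2$ functions composed with Malliavin-differentiable random variables (here $(\mathbf{H_2})$ gives the required $C^2$ regularity and bounded derivatives of $b,\sigma$ and the moment bounds on $\partial_x c$). This yields the recursion
\begin{eqnarray*}
D_r^B X_t^{(n+1)}=\sigma(r,X_r^{(n)})+\int_r^t b'(s,X_s^{(n)})D_r^B X_s^{(n)}\,ds+\int_r^t\sigma'(s,X_s^{(n)})D_r^B X_s^{(n)}\,dB_s\\
+\int_r^t\int_{\mathbb{R}_0}\partial_x c(X_{s-}^{(n)},z)D_r^B X_s^{(n)}\,\tilde N(ds,dz),
\end{eqnarray*}
and the analogous recursion for $D_{r,\xi}^N X_t^{(n+1)}$ with initial term $\partial_z c(X_r^{(n)},\xi)$.

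Next I would derive a uniform-in-$n$ bound $\sup_n\sup_{r\le t\le T}\mathbb{E}|D_r^B X_t^{(n)}|^p\le C$ (and likewise for $D^N$): apply Hölder, the Burkholder--Davis--Gundy inequality and Kunita's inequality (Lemma~\ref{KN}) to the recursion, use boundedness of $b',\sigma'$ and the bounds $\int_{\mathbb{R}_0}|\partial_x c(x,z)|^p\nu(dz)\le K$ from $(\mathbf{H_2})$, and close the estimate with Gronwall after noting $\mathbb{E}|\sigma(r,X_r^{(n)})|^p\le C$ by linear growth and Lemma~\ref{lem:3.1}. Then, to get convergence, I would estimate $\mathbb{E}\|D^B X^{(n+1)}-D^B X^{(n)}\|_H^2$: the difference splits into a term coming from the inhomogeneity $\sigma(r,X_r^{(n)})-\sigma(r,X_r^{(n-1)})$, controlled by $\mathbb{E}|X_r^{(n)}-X_r^{(n-1)}|^2$ which is summable, plus terms where the coefficients are evaluated at consecutive iterates multiplied by $D X^{(n)}$ or by $D X^{(n)}-D X^{(n-1)}$; using the Lipschitz bound on $\partial_x c$ from $(\mathbf{H_2})$, the uniform derivative bound, and Lemma~\ref{lem:3.1} one obtains a Gronwall-type inequality showing the Malliavin derivatives form a Cauchy sequence. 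Since $X^{(n)}\to X$ in $L^2$ and $D X^{(n)}$ converges in $L^2(\Omega;H)$ (resp. $L^2(\Omega;L^2(\nu\otimes dt))$), closedness of $D^B$ and $D^N$ gives $X_t\in\mathbb{D}^{1,B}\cap\mathbb{D}^{1,N}$ with derivative equal to the limit.

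Finally I would pass to the limit in the recursions. The right-hand sides converge because $b'(s,X_s^{(n)})\to b'(s,X_s)$, $\sigma'(s,X_s^{(n)})\to\sigma'(s,X_s)$ and $\partial_x c(X_{s-}^{(n)},z)\to\partial_x c(X_{s-},z)$ (continuity from $(\mathbf{H_2})$, with the integrability controls again furnished by $(\mathbf{H_2})$), while $D_r^B X_s^{(n)}\to D_r^B X_s$ in $L^2$ and $\sigma(r,X_r^{(n)})\to\sigma(r,X_r)$; combining these with BDG and Kunita's inequality shows each integral term converges, so the limit $D_r^B X_t$ solves \eqref{equ:3.27}, and similarly $D_{r,\xi}^N X_t$ solves \eqref{equ:3.28} with the forcing term $\partial_z c(X_r,\xi)$ (finite by the bound $|\partial_z c(x,z)|\le K(1+|x|)$ and Lemma~\ref{lem:3.1}). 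The vanishing of the derivatives for $r>t$ is immediate from adaptedness. The same argument applies verbatim to the system \eqref{equ:1.1} for $X_t^\epsilon$, since the extra $1/\epsilon$ factors and exponential kernels do not affect any of the structural estimates. I expect the main obstacle to be the convergence of the Malliavin derivatives of the Picard iterates: one must carefully arrange the estimate so that the ``bad'' term involving $D X^{(n)}$ times the increment of the coefficients is dominated using the \emph{Lipschitz} continuity of $\partial_x c$ (assumed precisely for this purpose in $(\mathbf{H_2})$) together with the uniform $L^p$ bound on $D X^{(n)}$ and Hölder's inequality, rather than naively trying to bound it by the supremum of $\partial_x c$.
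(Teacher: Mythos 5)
Your argument is mathematically sound, but it is doing far more work than the paper does: the paper's entire proof consists of observing that adaptedness forces $D_{r}^{B}X_{t}=D_{r,\xi}^{N}X_{t}=0$ for $r>t$, and then invoking Theorem 11.4.3 of \cite{N} (or Theorem 3 of \cite{Pe}) to obtain \eqref{equ:3.27} and \eqref{equ:3.28} for $r\leq t$. What you have written is, in effect, a self-contained reconstruction of the proof of that cited theorem: the Picard iteration, the inductive differentiability of the iterates via the derivative rules for the stochastic integrals, the uniform Gronwall bounds, the Cauchy estimate for the derivatives, and the passage to the limit using closedness of $D^{B}$ and $D^{N}$. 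The two routes therefore rest on the same underlying mechanism; the difference is one of presentation. Your version buys self-containedness and makes explicit exactly where each hypothesis in $(\mathbf{H_{2}})$ is used (in particular you correctly identify that the Lipschitz continuity of $\partial_{x}c$, together with a uniform $L^{p}$ bound on $D X^{(n)}$ and H\"older's inequality, is what controls the cross term $(\partial_{x}c(X^{(n)})-\partial_{x}c(X^{(n-1)}))DX^{(n)}$ in the Cauchy estimate, and that the analogous control for $b',\sigma'$ needs bounded second derivatives). The paper's version buys brevity at the cost of hiding all of this in a citation. Two small remarks: first, the Cauchy-sequence step can be bypassed by the standard lemma that $F_{n}\to F$ in $L^{2}$ together with $\sup_{n}\mathbb{E}\|DF_{n}\|^{2}<\infty$ already implies $F\in\mathbb{D}^{1,2}$ with $DF_{n}\to DF$ weakly, after which the limit equation identifies the derivative; your stronger route is fine but requires the extra regularity you flag. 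Second, your closing claim that the argument applies verbatim to \eqref{equ:1.1} belongs to the next proposition, not this one, and there the forcing term acquires the factor $(1-e^{-(t-r)/\epsilon})$ from the exponential kernel, so ``verbatim'' is a slight overstatement, though the structural estimates are indeed unaffected.
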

\begin{proof}
As the solution $\{X_{t}\}_{0\leq t\leq T}$ is $\mathcal{F}_{t}$-adapted, then 
\begin{eqnarray}
D_{r}^{B}X_{t}=D_{r,\xi}^{N}X_{t}=0, \quad t<r, \nonumber
\end{eqnarray}
while $t\geq r$, by Theorem 11.4.3 in~\cite{N} or Theorem 3 in~\cite{Pe}, we derive (\ref{equ:3.27}) and (\ref{equ:3.28}). 
\end{proof}
Similarly, 
\begin{proposition}
Under the assumption $(\mathbf{H_{1}})$ and $(\mathbf{H_{2}})$, the solution to equation (\ref{equ:1.1}) is Malliavin differentiable and satisfies
\begin{eqnarray}
D_{r}^{B}X_{t}^{\epsilon}&=&(1-e^{-\frac{\alpha}{\epsilon}(t-r)})\sigma(r,X_{r}^{\epsilon})+\int_{r}^{t}b'(s,X_{s}^{\epsilon})D_{r}^{B}X_{s}^{\epsilon}ds\nonumber\\
&&-\int_{0}^{t}e^{-\frac{1}{\epsilon}(t-s)}b'(s,X_{s}^{\epsilon})D_{r}^{B}X_{s}^{\epsilon}ds+\int_{r}^{t}\sigma'(s,X_{s}^{\epsilon})D_{r}^{B}X_{s}^{\epsilon}dB_{s}\nonumber\\
&&-\int_{r}^{t}e^{-\frac{1}{\epsilon}(t-s)}\sigma'(s,X_{s}^{\epsilon})D_{r}^{B}X_{s}^{\epsilon}dB_{s}+\int_{r}^{t}\int_{\mathbb{R}_{0}}\partial_{x}c(X_{s-}^{\epsilon},z)D_{r}^{B}X_{s}^{\epsilon}\tilde{N}(ds,dz)\nonumber\\
&&-\int_{r}^{t}\int_{\mathbb{R}_{0}}e^{-\frac{1}{\epsilon}(t-s)}\partial_{x}c(X_{s-}^{\epsilon},z)D_{r}^{B}X_{s}^{\epsilon}\tilde{N}(ds,dz),\quad r\leq t,\label{Xep}\\
D_{r}^{B}X_{t}^{\epsilon}&=&0,\quad r>t.\nonumber
\end{eqnarray}
and
\begin{eqnarray}
D_{r,\xi}^{N}X_{t}^{\epsilon}&=&(1-e^{-\frac{1}{\epsilon}(t-r)})\partial_{z}c(X_{r}^{\epsilon},\xi)+\int_{r}^{t}b'(s,X_{s}^{\epsilon})D_{r,\xi}^{N}X_{s}^{\epsilon}ds\nonumber\\
&&-\int_{r}^{t}e^{-\frac{1}{\epsilon}(t-s)}b'(s,X_{s}^{\epsilon})D_{r,\xi}^{N}X_{s}^{\epsilon}ds+\int_{r}^{t}\sigma'(s,X_{s}^{\epsilon})D_{r,\xi}^{N}X_{s}^{\epsilon}dB_{s}\nonumber\\
&&-\int_{r}^{t}e^{-\frac{1}{\epsilon}(t-s)}\sigma'(s,X_{s}^{\epsilon})D_{r,\xi}^{N}X_{s}^{\epsilon}dB_{s}+\int_{r}^{t}\int_{\mathbb{R}_{0}}\partial_{x}c(X_{s-}^{\epsilon},z)D_{r,\xi}^{N}X_{s}^{\epsilon}\tilde{N}(ds,dz)\nonumber\\
&&-\int_{r}^{t}\int_{\mathbb{R}_{0}}e^{-\frac{1}{\epsilon}(t-s)}\partial_{x}c(X_{s-}^{\epsilon},z)D_{r,\xi}^{N}X_{s}^{\epsilon}\tilde{N}(ds,dz),~~ r\leq t,\label{DRX}\\
D_{r,\xi}^{N}X_{t}^{\epsilon}&=&0,\quad r>t.\nonumber
\end{eqnarray}
\end{proposition}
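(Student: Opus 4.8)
The plan is to mimic the derivation of the Malliavin derivative equations (\ref{equ:3.27})--(\ref{equ:3.28}) for the limiting SDE, but applied to the integrated representation (\ref{Xequ:3.6}) of $X_t^\epsilon$ rather than to the differential form (\ref{equ:1.1}) directly. First I would establish Malliavin differentiability of $X_t^\epsilon$: since $X_t^\epsilon$ solves a linear-growth, Lipschitz SDE with jumps (the system (\ref{Sr}), or equivalently (\ref{Xequ:3.6})), the same general existence theorems invoked for (\ref{equ:3.4}), namely Theorem 11.4.3 in \cite{N} or Theorem 3 in \cite{Pe}, apply to the pair $(X_t^\epsilon, Y_t^\epsilon)$, giving $X_t^\epsilon \in \mathbb{D}^{1,B}\cap\mathbb{D}^{1,N}$ together with the a.s. vanishing of the derivatives for $r > t$ by adaptedness. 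Then the content is to identify the equations the derivatives satisfy.

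For the concrete form, I would apply the Malliavin derivative operator $D_r^B$ term-by-term to the right-hand side of (\ref{Xequ:3.6}), using the chain rule for $D^B$ on $C^2$ functions with bounded derivatives (guaranteed by $(\mathbf{H_2})$ for $b,\sigma$ and by the moment bounds on $\partial_x c$), together with the commutation rules recalled just before Lemma \ref{KN}: $D_r^B\big(\int_0^t u(s)\,ds\big)=\int_r^t D_r^B u(s)\,ds$, $D_r^B\big(\int_0^t u(s)\,dB_s\big)=u(r)+\int_r^t D_r^B u(s)\,dB_s$, and Proposition 2.1 for the $\tilde N$-integral, which contributes $\partial_z(\cdot)$ evaluated at $r$ plus $\int_r^t D_r^B(\cdot)\,\tilde N(ds,dz)$ — here the $\partial_z$ term vanishes because the $D^B$-derivative of the jump integrand has no explicit $z$-dependence beyond that already present, so only the stochastic-integral remainder survives. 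The deterministic term $x_0 + \epsilon y_0(1-e^{-t/\epsilon})$ has zero $D^B$-derivative. Collecting: the Brownian stochastic integral $\int_0^t \sigma(s,X_s^\epsilon)\,dB_s$ produces the free term $\sigma(r,X_r^\epsilon)$, while $-\int_0^t e^{-(t-s)/\epsilon}\sigma(s,X_s^\epsilon)\,dB_s$ produces $-e^{-(t-r)/\epsilon}\sigma(r,X_r^\epsilon)$; these combine to $(1-e^{-(t-r)/\epsilon})\sigma(r,X_r^\epsilon)$, matching the stated leading term of (\ref{Xep}) (with $\alpha = 1$). The remaining terms yield exactly the displayed linear SDE for $D_r^B X_t^\epsilon$. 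The derivation of (\ref{DRX}) for $D_{r,\xi}^N X_t^\epsilon$ is identical in structure, except that now the free term comes from the $\partial_z c$ contribution via Proposition 2.1 applied to $\int_0^t\int_{\mathbb{R}_0} c(X_{s-}^\epsilon,z)\,\tilde N(ds,dz)$ and its $e^{-(t-s)/\epsilon}$-weighted counterpart, producing $(1-e^{-(t-r)/\epsilon})\partial_z c(X_r^\epsilon,\xi)$, while the Brownian integrals contribute only remainder terms; $(\mathbf{H_2})$'s hypotheses on $\partial_z c$ and $\partial_x c$ ensure the integrability needed for Proposition 2.1 and for $D^N X_s^\epsilon$ to lie in the relevant spaces.

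The main obstacle, as usual in such arguments, is not the formal computation but the rigorous justification: one must verify that each approximation $X_t^{\epsilon,n}$ (e.g. a Picard iteration for (\ref{Sr})) stays in $\mathbb{D}^{1,B}\cap\mathbb{D}^{1,N}$ with derivatives bounded uniformly in $n$ in the $|||\cdot|||_2$-type seminorm, so that the limit identities pass to the limit. Concretely, one sets up the linear integral equations above as fixed-point equations in $L^2(\Omega; L^2([0,t]))$ (resp.\ $L^2(\Omega; L^2([0,t]\times\mathbb{R}_0, ds\,\nu(d\xi)))$), shows they have unique solutions via Gronwall — using Kunita's inequality (Lemma \ref{KN}) and the BDG inequality to control the jump and Brownian integrals, the exponential weights $e^{-(t-s)/\epsilon}$ being bounded by $1$ so that all estimates are uniform in $\epsilon\in(0,1)$ — and then identifies those solutions with $D^B X_t^\epsilon$ and $D^N X_t^\epsilon$ by closedness of the operators. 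I would also record, for use in the next section, the bound $\mathbb{E}\,|D_r^B X_t^\epsilon|^p + \mathbb{E}\,|D_{r,\xi}^N X_t^\epsilon|^p \le C_p$ uniformly in $\epsilon$, which follows from the same Gronwall argument applied to the $p$-th moment, since the free terms are bounded by the linear-growth-plus-moment bound (\ref{equ:b3.5}).
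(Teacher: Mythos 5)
Your proposal is correct and follows essentially the same route as the paper, which for this proposition offers no written proof at all beyond the word ``Similarly'' (and for the preceding proposition only cites adaptedness for $r>t$ and Theorem 11.4.3 of \cite{N} / Theorem 3 of \cite{Pe}); your term-by-term differentiation of (\ref{Xequ:3.6}) with the commutation rules is exactly the intended argument, worked out in more detail. One small imprecision: the absence of a boundary term when $D^B$ hits the $\tilde N$-integral is not a consequence of Proposition 2.1 (which governs $D^N$ and its $\partial_z$ boundary term) but simply of the fact that $D^B$ commutes with compensated Poisson integration, the free term $u(r)$ arising only when $D^B$ acts on a $dB$-integral.
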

\begin{lemma}\label{Le}
Under the assumption $(\mathbf{H_{1}})$ and $(\mathbf{H_{2}})$, then for any $p\geq2$ and $\epsilon\in(0,1)$, we have
\begin{eqnarray}
\mathbb{E}|D_{r}^{B}X_{t}^{\epsilon}|^{p}&\leq& C_{T},\quad 0\leq r\leq t\leq T,\label{equ:3.32}\\
\mathbb{E}|D_{r,\xi}^{N}X_{t}^{\epsilon}|^{p}&\leq& C_{T},\quad 0\leq r\leq t\leq T,\label{equ:3.33}
\end{eqnarray}
and
\begin{eqnarray}
\mathbb{E}\|D^{B}X_{t}^{\epsilon}-D^{B}X_{t}\|_{L^{2}(0,T)}^{2}&\leq& C_{T}\epsilon.\\\label{equ:3.34}
\mathbb{E}\|D^{N}X_{t}^{\epsilon}-D^{N}X_{t}\|_{L^{2}([0,T]\times\mathbb{R}_{0})}^{2}&\leq& C_{T}\epsilon,\label{equ:3.35}
\end{eqnarray}
where $\|f\|_{L^{2}([0,T]\times\mathbb{R}_{0})}=\int_{0}^{T}\int_{\mathbb{R}_{0}}|f(t,z)|^{2}\nu(dz)dt$.
\end{lemma}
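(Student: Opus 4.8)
The plan is to treat the four estimates in two groups: the uniform moment bounds \eqref{equ:3.32}--\eqref{equ:3.33} first, and then the convergence estimates \eqref{equ:3.34}--\eqref{equ:3.35}, which will be obtained by subtracting the linear equations for $D^B X^\epsilon_t$ (resp. $D^N X^\epsilon_t$) and $D^B X_t$ (resp. $D^N X_t$) and running a Gronwall argument on the difference.

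For \eqref{equ:3.32} I would start from the linear equation \eqref{Xep} for $u_s := D_r^B X_s^\epsilon$, fix $p \geq 2$, apply $\mathbb{E}\sup_{r\le t'\le t}|\cdot|^p$, and estimate the seven terms exactly as in the proof of Lemma \ref{lem:3.1}: the free term $(1-e^{-\frac{1}{\epsilon}(t-r)})\sigma(r,X_r^\epsilon)$ is bounded by $C(1+\mathbb{E}\sup|X_r^\epsilon|^p)\le C_T$ using \eqref{equ:b3.5}; the Lebesgue-type integrals are handled by H\"older, the Brownian integrals by Burkh\"older--Davis--Gundy, and the jump integrals by Kunita's first inequality (Lemma \ref{KN}), each of which is controlled because $b',\sigma'$ are bounded and $\int_{\mathbb{R}_0}|\partial_x c(x,z)|^p\nu(dz)\le K$ by $(\mathbf{H_2})$. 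The exponential factors $e^{-\frac{1}{\epsilon}(t-s)}$ only help (they are $\le 1$), so every coefficient in front of $u_s$ is uniformly bounded and one arrives at
\[
\mathbb{E}\sup_{r\le t'\le t}|u_{t'}|^p \le C_T + C_T\int_r^t \mathbb{E}\sup_{r\le s'\le s}|u_{s'}|^p\, ds,
\]
and Gronwall gives \eqref{equ:3.32}. The estimate \eqref{equ:3.33} is identical, with the free term $(1-e^{-\frac{1}{\epsilon}(t-r)})\partial_z c(X_r^\epsilon,\xi)$ bounded using $|\partial_z c(x,z)|\le K(1+|x|)$ and \eqref{equ:b3.5}.

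For \eqref{equ:3.34} set $\Delta_s := D_r^B X_s^\epsilon - D_r^B X_s$. Subtracting \eqref{equ:3.27} from \eqref{Xep}, $\Delta_s$ solves a linear SDE whose free and forcing terms split into two kinds: (i) differences of coefficients along $X^\epsilon$ versus $X$, e.g. $\sigma(r,X_r^\epsilon)-\sigma(r,X_r)$, $(b'(s,X_s^\epsilon)-b'(s,X_s))D_r^B X_s$, etc., which are controlled by the Lipschitz bounds on $\sigma$, $b'$, $\sigma'$ and $\int|\partial_x c(x,z)-\partial_x c(y,z)|^2\nu(dz)\le K|x-y|^2$ together with Lemma \ref{lem:3.1} (giving $\sup_t\mathbb{E}|X_t^\epsilon-X_t|^p\le C\epsilon^{p/2}$) and the moment bound \eqref{equ:3.32} on $D_r^B X_s$; and (ii) the ``extra'' exponential terms $-\int_r^t e^{-\frac{1}{\epsilon}(t-s)}b'(s,X_s^\epsilon)D_r^B X_s^\epsilon ds$, $-\int_r^t e^{-\frac{1}{\epsilon}(t-s)}\sigma'(s,X_s^\epsilon)D_r^B X_s^\epsilon dB_s$ and the analogous jump term, which have no counterpart in the limiting equation. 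The first kind contributes $O(\epsilon)$ to $\mathbb{E}|\Delta_t|^2$ plus a $\int_0^t \mathbb{E}|\Delta_s|^2 ds$ Gronwall term. For the second kind, after BDG/Kunita the key computation is $\int_r^t e^{-\frac{2}{\epsilon}(t-s)}ds \le \frac{\epsilon}{2}$ (and $\int_r^t e^{-\frac{1}{\epsilon}(t-s)}ds\le\epsilon$), so using the uniform bound \eqref{equ:3.32} these terms are also $O(\epsilon)$. Then I would integrate over $r\in[0,T]$ (Fubini, legitimate since everything is nonnegative and the bounds are uniform in $r$), obtaining $\mathbb{E}\|D^B X_t^\epsilon - D^B X_t\|_{L^2(0,T)}^2 \le C_T\epsilon + C_T\int_0^t \mathbb{E}\|D^B X_s^\epsilon - D^B X_s\|_{L^2(0,s)}^2 ds$, and Gronwall closes it. The proof of \eqref{equ:3.35} is word-for-word the same, with $\sigma$ replaced by $\partial_z c$ in the free term and $\int_{\mathbb{R}_0}(\cdots)\nu(dz)$ inserted; the Lipschitz bound $|\partial_z c(x,z)-\partial_z c(y,z)|\le K|x-y|$ handles the free-term difference.

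The main obstacle is bookkeeping rather than conceptual: one must be careful that when estimating $\mathbb{E}|\Delta_t|^2$ the Gronwall constant does not depend on $r$ (it does not, because all the coefficient bounds from $(\mathbf{H_1})$, $(\mathbf{H_2})$ and the moment bounds \eqref{equ:b3.5}, \eqref{equ:3.32}--\eqref{equ:3.33} are uniform in $r$ and $\epsilon$), and that the exponential "defect" terms genuinely produce an $\epsilon$ and not merely an $O(1)$ bound — this is exactly where the identities $\int_r^t e^{-\frac{1}{\epsilon}(t-s)}ds\le\epsilon$ and $\int_r^t e^{-\frac{2}{\epsilon}(t-s)}ds\le\epsilon/2$ are essential, and why the $p$-th power Kunita term $\int_r^t\int_{\mathbb{R}_0} e^{-\frac{p}{\epsilon}(t-s)}|\partial_x c|^p D_r^B X_s^\epsilon\,\nu(dz)ds$ is likewise $O(\epsilon)$. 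A minor point is that the defect terms for \eqref{equ:3.34} use $D_r^B X_s^\epsilon$ (not $\Delta_s$), so they are pure source terms of size $O(\epsilon)$ and do not enter the Gronwall feedback.
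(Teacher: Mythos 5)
Your overall architecture --- moment bounds for the linearized Malliavin equations via H\"older, Burkholder--Davis--Gundy and Kunita's inequality plus Gronwall, then subtracting the two derivative equations, splitting the source into coefficient-difference terms and exponential defect terms, integrating over $r$, and closing with Gronwall --- is exactly the paper's proof of this lemma, and the estimates \eqref{equ:3.32}--\eqref{equ:3.33} are handled the same way in both.

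There is, however, one substantive omission in your decomposition for \eqref{equ:3.34}: the free term of $D_r^B X_t^\epsilon$ in \eqref{Xep} is $(1-e^{-\frac{1}{\epsilon}(t-r)})\sigma(r,X_r^\epsilon)$, not $\sigma(r,X_r^\epsilon)$, so the free-term difference is
\[
\bigl(\sigma(r,X_r^\epsilon)-\sigma(r,X_r)\bigr)-e^{-\frac{1}{\epsilon}(t-r)}\sigma(r,X_r^\epsilon),
\]
and the second piece appears in neither of your two groups (your group (ii) lists only the three exponential \emph{integral} defects). This piece is the only source term that is \emph{not} $O(\epsilon)$ pointwise in $r$: its second moment is $e^{-\frac{2}{\epsilon}(t-r)}\mathbb{E}|\sigma(r,X_r^\epsilon)|^2$, which is of order one when $t-r=O(\epsilon)$. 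So your intermediate claim that the sources contribute $O(\epsilon)$ to $\mathbb{E}|\Delta_t|^2$ uniformly in $r$ is false; the correct pointwise bound (this is the paper's (3.43)) is $C\epsilon+Ce^{-\frac{2}{\epsilon}(t-r)}+C_T\int_r^t\mathbb{E}|\Delta_s|^2\,ds$, and the extra term only becomes $O(\epsilon)$ after the $dr$-integration you perform at the end, via $\int_0^t e^{-\frac{2}{\epsilon}(t-r)}dr\le\epsilon/2$. Since you do integrate over $r$ before the final Gronwall, the repair is just to carry this term along rather than discard it; the same remark applies verbatim to the free term $(1-e^{-\frac{1}{\epsilon}(t-r)})\partial_z c(X_r^\epsilon,\xi)-\partial_z c(X_r,\xi)$ for \eqref{equ:3.35}. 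Everything else in your argument matches the paper.
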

\begin{proof}
By (\ref{Xep}) and $(\mathbf{H_{1}})$ and $(\mathbf{H_{2}})$, we have
\begin{eqnarray}
&&\mathbb{E}|D_{r}^{B}X_{t}^{\epsilon}|^{p}\leq C_{p}\mathbb{E}|\sigma(r,X_{r}^{\epsilon})|^{p}+C_{p}\mathbb{E}\Big|\int_{r}^{t}b'(s,X_{s}^{\epsilon})D_{r}^{B}X_{s}^{\epsilon}ds\Big|^{p}\nonumber
\end{eqnarray}
\begin{eqnarray}\label{equ:3.36}
&&+C_{p}\mathbb{E}\Big|\int_{0}^{t}e^{-\frac{1}{\epsilon}(t-s)}b'(s,X_{s}^{\epsilon})D_{r}^{B}X_{s}^{\epsilon}ds\Big|^{p}+C_{p}\mathbb{E}\Big|\int_{t}^{r}\sigma'(s,X_{s}^{\epsilon})D_{r}^{B}X_{s}^{\epsilon}dB_{s}\Big|^{p}\nonumber\\
&&+C_{p}\mathbb{E}\Big|\int_{t}^{r}e^{-\frac{1}{\epsilon}(t-s)}\sigma'(s,X_{s}^{\epsilon})D_{r}^{B}X_{s}^{\epsilon}dB_{s}\Big|^{p}+C_{p}\mathbb{E}\Big|\int_{r}^{t}\int_{\mathbb{R}_{0}}\partial_{x}c(X_{s-}^{\epsilon},z)D_{r}^{B}X_{s}^{\epsilon}\tilde{N}(ds,dz)\Big|^{p}\nonumber\\
&&+C_{p}\mathbb{E}\Big|\int_{r}^{t}\int_{\mathbb{R}_{0}}e^{-\frac{1}{\epsilon}(t-s)}\partial_{x}c(X_{s-}^{\epsilon},z)D_{r}^{B}X_{s}^{\epsilon}\tilde{N}(ds,dz)\Big|^{p}\nonumber\\
&&\triangleq\sum_{i=1}^{7}Q_{i}^{\epsilon}(t).
\end{eqnarray}
It is obvious that
\begin{eqnarray}\label{equ:3.37}
Q_{1}^{\epsilon}(t)\leq C_{p}+C_{p}\mathbb{E}|X_{r}^{\epsilon}|^{p}.
\end{eqnarray}
By $(\mathbf{H_{2}})$ and the Burkh${\rm \ddot{o}}$lder-Davies-Gundy inequality, 
\begin{eqnarray}\label{equ:3.38}
Q_{i}^{\epsilon}(t)\leq C_{p,T}\int_{r}^{t}\mathbb{E}|D_{r}^{B}X_{s}^{\epsilon}|^{p}ds,\quad i=2,3,4,5.
\end{eqnarray}
By Kunita's first inequality and $(\mathbf{H_{2}})$, we have
\begin{eqnarray}\label{equ:3.39}
Q_{6}^{\epsilon}(t)&\leq& C_{p}\Big\{\mathbb{E}\Big(\int_{r}^{t}\int_{\mathbb{R}_{0}}|\partial_{x}c(X_{s-}^{\epsilon},z)D_{r}^{B}X_{s}^{\epsilon}|^{2}\nu(dz)ds\Big)^{\frac{p}{2}}\nonumber\\
&&+\mathbb{E}\int_{r}^{t}\int_{\mathbb{R}_{0}}|\partial_{x}c(X_{s-}^{\epsilon},z)|^{p}|D_{r}^{B}X_{s}^{\epsilon}|^{p}\nu(dz)ds\Big\}\nonumber\\
&\leq&C_{p}\Big\{\mathbb{E}\Big(\int_{r}^{t}|D_{r}^{B}X_{s}^{\epsilon}|^{2}ds\Big)^{\frac{p}{2}}+\mathbb{E}\int_{r}^{t}|D_{r}^{B}X_{s}^{\epsilon}|^{p}ds\Big\}\nonumber\\
&\leq& C_{p,T}\mathbb{E}\int_{r}^{t}|D_{r}^{B}X_{s}^{\epsilon}|^{p}ds.
\end{eqnarray}
Similarly, 
\begin{eqnarray}\label{equ:3.40}
Q_{7}^{\epsilon}(t)&\leq&C_{p,T}\mathbb{E}\int_{r}^{t}|D_{r}^{B}X_{s}^{\epsilon}|^{p}ds.
\end{eqnarray}
Then by (\ref{equ:3.36}) and (\ref{equ:3.37})-(\ref{equ:3.40}), 
\begin{eqnarray*}
\mathbb{E}|D_{r}^{B}X_{t}^{\epsilon}|^{p}&\leq&C_{p}+C_{p}\mathbb{E}|X_{r}^{\epsilon}|^{p}+C_{p,T}\mathbb{E}\int_{r}^{t}|D_{r}^{B}X_{s}^{\epsilon}|^{p}ds.
\end{eqnarray*}
Gronwall's inequality and (\ref{equ:b3.5}) yield
\begin{eqnarray*}\label{DX}
\mathbb{E}|D_{r}^{B}X_{t}^{\epsilon}|^{p}&\leq&C_{p,T}.
\end{eqnarray*}
By (\ref{equ:3.27}) and (\ref{Xep}), 
\begin{eqnarray}\label{equ:3.42}
&&D_{r}^{B}X_{t}^{\epsilon}-D_{r}^{B}X_{t}=[(1-e^{-\frac{1}{\epsilon}(t-r)})\sigma(r,X_{r}^{\epsilon})-\sigma(r,X_{r})]\nonumber\\
&&-\int_{r}^{t}e^{-\frac{1}{\epsilon}(t-s)}b'(s,X_{s}^{\epsilon})D_{r}^{B}X_{s}^{\epsilon}ds-\int_{r}^{t}\int_{\mathbb{R}_{0}}e^{-\frac{1}{\epsilon}(t-s)}\partial_{x}c(X_{s-}^{\epsilon},z)D_{r}^{B}X_{s}^{\epsilon}\tilde{N}(ds,dz)\nonumber\\
&&+\int_{r}^{t}[b'(s,X_{s}^{\epsilon})D_{r}^{B}X_{s}^{\epsilon}-b'(s,X_{s})D_{r}^{B}X_{s}]ds\nonumber\\
&&+\int_{r}^{t}[\sigma'(s,X_{s}^{\epsilon})D_{r}^{B}X_{s}^{\epsilon}-\sigma'(s,X_{s})D_{r}^{B}X_{s}]dB_{s}\nonumber\\
&&+\int_{r}^{t}\int_{\mathbb{R}_{0}}[\partial_{x}c(X_{s-}^{\epsilon},z)D_{r}^{B}X_{s}^{\epsilon}-\partial_{x}c(X_{s},z)D_{r}^{B}X_{s}]\tilde{N}(ds,dz).
\end{eqnarray}
By (\ref{equ:b3.5}), for any $0\leq r\leq t\leq T$,
\begin{eqnarray}\label{equ:3.43}
\mathbb{E}|(1-e^{-\frac{1}{\epsilon}(t-r)})\sigma(r,X_{r}^{\epsilon})-\sigma(r,X_{r})|^{2}
&\leq& 2K^{2}\mathbb{E}|X_{r}^{\epsilon}-X_{r}|^{2}+2K^{2}e^{-\frac{2}{\epsilon}(t-r)}(1+\mathbb{E}|X_{r}^{\epsilon}|^{2})\nonumber\\
&\leq& C\epsilon+Ce^{-\frac{2}{\epsilon}(t-r)}.
\end{eqnarray}
Furthermore, by the H${\rm \ddot{o}}$lder inequality, the Burkh${\rm \ddot{o}}$lder-Davies-Gundy inequality, (\ref{equ:3.32}) and the boundedness of $b'$ as well as $\sigma'$, 
\begin{eqnarray}\label{equ:3.44}
\mathbb{E}\Big|\int_{r}^{t}e^{-\frac{1}{\epsilon}(t-s)}b'(s,X_{s}^{\epsilon})D_{r}^{B}X_{s}^{\epsilon}ds\Big|^{2}
&\leq& C_{T}\int_{r}^{t}e^{-\frac{1}{\epsilon}(t-s)}ds\leq C_{T}\epsilon,
\end{eqnarray} 
and 
\begin{eqnarray}\label{equ:3.45}
\mathbb{E}\Big|\int_{t}^{r}e^{-\frac{1}{\epsilon}(t-s)}\sigma'(s,X_{s}^{\epsilon})D_{r}^{B}X_{s}^{\epsilon}dB_{s}\Big|^{2}&\leq& C_{T}\int_{r}^{t}e^{-\frac{2}{\epsilon}(t-s)}\mathbb{E}|D_{r}^{B}X_{s}^{\epsilon}|^{2}ds\nonumber\\
&\leq&C_{T}\int_{r}^{t}e^{-\frac{2}{\epsilon}(t-s)}ds\leq C_{T}\epsilon,
\end{eqnarray}
For the fourth term in (\ref{equ:3.42}), by the Burkh${\rm \ddot{o}}$lder-Davies-Gundy inequality, $(\mathbf{H_{2}})$ and (\ref{equ:3.32}), 
\begin{eqnarray}\label{equ:3.46}
&&\mathbb{E}\Big|\int_{r}^{t}\int_{\mathbb{R}_{0}}e^{-\frac{2}{\epsilon}(t-s)}\partial_{x}c(X_{s-}^{\epsilon},z)D_{r}^{B}X_{s}^{\epsilon}\tilde{N}(ds,dz)\Big|^{2}\nonumber\\
&\leq&C\mathbb{E}\int_{r}^{t}\int_{\mathbb{R}_{0}}e^{-\frac{2}{\epsilon}(t-s)}|\partial_{x}c(X_{s-}^{\epsilon},z)|^{2}|D_{r}^{B}X_{s}^{\epsilon}|^{2}\nu(dz)ds\nonumber\\
&\leq&C\int_{r}^{t}e^{-\frac{2}{\epsilon}(t-s)}ds\leq C\epsilon,\quad 0\leq r\leq t\leq T.
\end{eqnarray}
For the fifth term in (\ref{equ:3.42}), by the H${\rm \ddot{o}}$lder inequality, the boundedness of $b'$ and $b''$ and lemma \ref{lem:3.1}, 
\begin{eqnarray}
&&\mathbb{E}\Big|\int_{r}^{t}[b'(s,X_{s}^{\epsilon})D_{r}^{B}X_{s}^{\epsilon}-b'(s,X_{s})D_{r}^{B}X_{s}]ds\Big|^{2}\nonumber
\end{eqnarray}
\begin{eqnarray}\label{equ:3.47}
&\leq&C_{T}\int_{r}^{t}(\mathbb{E}|X_{s}^{\epsilon}-X_{s}|^{4})^{\frac{1}{2}}(\mathbb{E}|D_{r}^{B}X_{s}^{\epsilon}|^{4})^{\frac{1}{2}}ds+C_{T}\int_{r}^{t}\mathbb{E}|D_{r}^{B}X_{s}^{\epsilon}-D_{r}^{B}X_{s}|^{2}ds\nonumber\\
&\leq& C\epsilon+C_{T}\int_{r}^{t}\mathbb{E}|D_{r}^{B}X_{s}^{\epsilon}-D_{r}^{B}X_{s}|^{2}ds.
\end{eqnarray}
Similarly,
\begin{eqnarray}\label{equ:3.48}
&&\mathbb{E}\Big|\int_{r}^{t}[\sigma'(s,X_{s}^{\epsilon})D_{r}^{B}X_{s}^{\epsilon}-\sigma'(s,X_{s})D_{r}^{B}X_{s}]dB_{s}\Big|^{2}\nonumber\\
&&\leq C\epsilon+C\int_{r}^{t}\mathbb{E}|D_{r}^{B}X_{s}^{\epsilon}-D_{r}^{B}X_{s}|^{2}ds,
\end{eqnarray}
and
\begin{eqnarray}\label{equ:3.49}
&&\mathbb{E}\Big|\int_{r}^{t}\int_{\mathbb{R}_{0}}[\partial_{x}c(X_{s-}^{\epsilon},z)D_{r}^{B}X_{s}^{\epsilon}-\partial_{x}c(X_{s},z)D_{r}^{B}X_{s}]\tilde{N}(ds,dz)\Big|^{2}\nonumber\\
&\leq&2K\int_{r}^{t}\mathbb{E}|X_{s}^{\epsilon}-X_{s}|^{2}|D_{r}^{B}X_{s}^{\epsilon}|^{2}ds+2K\int_{r}^{t}\mathbb{E}|D_{r}^{B}X_{s}^{\epsilon}-D_{r}^{B}X_{s}|^{2}ds\nonumber\\
&\leq&C\epsilon+C_{T}\int_{r}^{t}\mathbb{E}|D_{r}^{B}X_{s}^{\epsilon}-D_{r}^{B}X_{s}|^{2}ds.
\end{eqnarray}
By (\ref{equ:3.42})-(\ref{equ:3.49}), we obtain
\begin{eqnarray}
\mathbb{E}|D_{r}^{B}X_{t}^{\epsilon}-D_{r}^{B}X_{t}|^{2}\leq C\epsilon+Ce^{-\frac{2}{\epsilon}(t-r)}+C_{T}\int_{r}^{t}\mathbb{E}|D_{r}^{B}X_{s}^{\epsilon}-D_{r}^{B}X_{s}|^{2}ds.\nonumber
\end{eqnarray}
Then
\begin{eqnarray}
&&\mathbb{E}\|D^{B}X_{t}^{\epsilon}-D^{B}X_{t}\|_{L^{2}(0,T)}^{2}=\int_{0}^{T}\mathbb{E}|D_{r}^{B}X_{t}^{\epsilon}-D_{r}^{B}X_{t}|^{2}dr\nonumber\\
&&\leq C\epsilon+C\int_{0}^{t}\mathbb{E}\|D^{B}X_{s}^{\epsilon}-D^{B}X_{s}\|_{L^{2}(0,T)}^{2}ds,
\end{eqnarray}
Gronwall's inequality yields
\begin{eqnarray}
\mathbb{E}\|D^{B}X_{s}^{\epsilon}-D^{B}X_{s}\|_{L^{2}(0,T)}^{2}&\leq&C\epsilon.
\end{eqnarray}
A similar proof leads to (\ref{equ:3.33}) and (\ref{equ:3.35}).
\end{proof}
In the following, if there's no confusion, let's write $\|\cdot\|_{L^{2}(0,T)}$ as $\|\cdot\|$.
\begin{lemma}\label{lem:3.3}
Under the assumption $(\mathbf{H_{1}})$ and $(\mathbf{H_{2}})$, let $\|\sigma\|_{0}\\
=\inf_{(t,x)\in[0,T]\times\mathbb{R}}|\sigma(t,x)|>0$ and $\|\partial_{z}c\|_{0}=\inf_{(x,z)\in\mathbb{R}\times\mathbb{R}_{0}}|\partial_{z}c(x,z)|>0$, then for any $p\geq1$,
\begin{eqnarray}
\mathbb{E}\Big[\frac{1}{\|D^{B}X_{t}\|^{2p}}\Big]\leq Ct^{-p},\quad 0\leq t\leq T,
\end{eqnarray}
and 
\begin{eqnarray}
\mathbb{E}\Big[\frac{1}{\|D^{N}X_{t}\|^{2p}}\Big]\leq Ct^{-p},\quad 0\leq t\leq T.\label{equ:b3.53}
\end{eqnarray}
\end{lemma}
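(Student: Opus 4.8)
The plan is to obtain a lower bound on the Malliavin ``variance'' $\|D^{B}X_{t}\|^{2}=\int_{0}^{t}|D_{r}^{B}X_{t}|^{2}\,dr$ that is comparable to $t$, and then feed it into negative moment estimates. First I would derive a \emph{linear equation with multiplicative noise} for $D_{r}^{B}X_{t}$ by reading off the structure of \eqref{equ:3.27}: for fixed $r$, the process $s\mapsto D_{r}^{B}X_{s}$ ($s\ge r$) solves a linear SDE with initial value $\sigma(r,X_{r})$, driven by the coefficients $b'$, $\sigma'$, $\partial_{x}c$. Hence one can write $D_{r}^{B}X_{t}=\sigma(r,X_{r})\,\Phi_{r,t}$, where $\Phi_{r,t}$ is the (scalar) stochastic exponential solving the linear SDE $d\Phi_{r,s}=b'(s,X_{s})\Phi_{r,s}\,ds+\sigma'(s,X_{s})\Phi_{r,s}\,dB_{s}+\int_{\mathbb{R}_{0}}\partial_{x}c(X_{s-},z)\Phi_{r,s}\,\tilde N(ds,dz)$ with $\Phi_{r,r}=1$. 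The key point is that $\Phi_{r,s}$ is an exponential-type process, so it is \emph{strictly positive} and has negative moments of all orders bounded uniformly in $r,s\in[0,T]$; this follows from the Doléans–Dade formula together with the moment/boundedness hypotheses in $(\mathbf{H_{1}})$–$(\mathbf{H_{2}})$ (the jump part stays positive because $\tilde N$ has finite intensity, so only finitely many jumps occur on $[0,T]$ and each multiplies $\Phi$ by $1+\partial_{x}c$, whose negative moments are controlled). Using $|\sigma(t,x)|\ge\|\sigma\|_{0}>0$, we get
\begin{eqnarray*}
\|D^{B}X_{t}\|^{2}=\int_{0}^{t}\sigma(r,X_{r})^{2}\,\Phi_{r,t}^{2}\,dr\ \ge\ \|\sigma\|_{0}^{2}\int_{0}^{t}\Phi_{r,t}^{2}\,dr.
\end{eqnarray*}

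Next I would control the negative moments of $\int_{0}^{t}\Phi_{r,t}^{2}\,dr$. The cleanest route is the standard ``small-ball'' argument: write $\Phi_{r,t}=\Phi_{r,t_{0}}\Phi_{t_{0},t}$ by the flow property (for $r\le t_{0}\le t$), or, more simply, bound $\int_{0}^{t}\Phi_{r,t}^{2}\,dr$ below using Jensen's inequality in the form $\big(\tfrac1t\int_{0}^{t}\Phi_{r,t}^{2}\,dr\big)^{-p}\le \tfrac1t\int_{0}^{t}\Phi_{r,t}^{-2p}\,dr$, so that
\begin{eqnarray*}
\mathbb{E}\Big[\Big(\int_{0}^{t}\Phi_{r,t}^{2}\,dr\Big)^{-p}\Big]\le t^{-p-1}\int_{0}^{t}\mathbb{E}\big[\Phi_{r,t}^{-2p}\big]\,dr\le C t^{-p},
\end{eqnarray*}
the last inequality using $\sup_{0\le r\le t\le T}\mathbb{E}[\Phi_{r,t}^{-2p}]\le C$. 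Combining with the lower bound above gives $\mathbb{E}\big[\|D^{B}X_{t}\|^{-2p}\big]\le\|\sigma\|_{0}^{-2p}\,\mathbb{E}\big[(\int_{0}^{t}\Phi_{r,t}^{2}dr)^{-p}\big]\le C t^{-p}$, which is exactly the claim. The negative-moment bound $\mathbb{E}[\|D^{N}X_{t}\|^{-2p}]\le Ct^{-p}$ is obtained in the same way: by \eqref{equ:3.28} the process $D_{r,\xi}^{N}X_{s}$ satisfies the \emph{same} linear SDE but with initial condition $\partial_{z}c(X_{r},\xi)$ at time $s=r$, so $D_{r,\xi}^{N}X_{t}=\partial_{z}c(X_{r},\xi)\,\Phi_{r,t}$ with the identical exponential $\Phi$; then $\|D^{N}X_{t}\|^{2}=\int_{0}^{t}\int_{\mathbb{R}_{0}}\partial_{z}c(X_{r},\xi)^{2}\Phi_{r,t}^{2}\,\nu(d\xi)\,dr\ge\|\partial_{z}c\|_{0}^{2}\,\nu(\mathbb{R}_{0})\int_{0}^{t}\Phi_{r,t}^{2}\,dr$ (the finiteness of $\nu$ guarantees $\nu(\mathbb{R}_{0})\in(0,\infty)$), and the same computation applies.

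I expect the main technical obstacle to be the uniform bound on the negative moments of the stochastic exponential $\Phi_{r,t}$ in the presence of jumps. For the Brownian and drift parts this is routine (boundedness of $b'$, $\sigma'$ makes $\Phi_{r,t}^{-1}$ again an exponential with bounded-moment exponent, so Novikov-type or direct Grönwall-on-moments arguments apply). The jump contribution requires that $1+\partial_{x}c(X_{s-},z)$ stay bounded away from $0$ in a suitable averaged sense so that $\Phi_{r,t}^{-1}$ has finite moments; here one uses the finite intensity of $\nu$ together with $\int_{\mathbb{R}_{0}}|\partial_{x}c(x,z)|^{p}\nu(dz)\le K$ from $(\mathbf{H_{2}})$ to control $\mathbb{E}\exp\big(p\int_{0}^{t}\int_{\mathbb{R}_{0}}|\log(1+\partial_{x}c)|\,\nu(dz)ds+\cdots\big)$ via the Lévy–Khintchine/exponential formula for Poisson integrals. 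A mild additional nondegeneracy ($1+\partial_{x}c(x,z)\ge\delta>0$, or at least $\ge 0$) is what makes this clean; I would either invoke it implicitly from $(\mathbf{H_{2}})$ or note that for $\epsilon$ small and the finite-activity regime only finitely many jumps occur a.s.\ so the argument localizes. Everything else — Jensen, Grönwall, and the elementary lower bounds using $\|\sigma\|_{0}$ and $\|\partial_{z}c\|_{0}$ — is bookkeeping.
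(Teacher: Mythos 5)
Your proposal is correct in outline and follows the same backbone as the paper's proof: both solve the linear variational equation \eqref{equ:3.27} explicitly by the Dol\'eans--Dade formula, write $D_{r}^{B}X_{t}=\sigma(r,X_{r})\Phi_{r,t}$ (the paper's \eqref{equ:3.52}), and use $|\sigma|\geq\|\sigma\|_{0}$ to reduce the claim to negative moments of $\int_{0}^{t}\Phi_{r,t}^{2}\,dr$. Where you diverge is the last step. You apply Jensen's inequality to the convex map $x\mapsto x^{-p}$, reducing everything to $\sup_{r\le t\le T}\mathbb{E}[\Phi_{r,t}^{-2p}]<\infty$, i.e.\ to uniform negative moments of a single stochastic exponential. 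The paper instead bounds $\int_{0}^{t}e^{2M_{t}-2M_{r}}dr\geq t\,e^{4\min_{s\le T}M_{s}}$ for the martingale part $M$, observes that $\langle M\rangle$ is bounded, and invokes the Dambis--Dubins--Schwarz time change plus Fernique's theorem to get exponential integrability of $\min_{s}M_{s}$. Your route is arguably cleaner here: DDS as cited applies to \emph{continuous} local martingales, whereas the paper's $M$ contains a compensated Poisson integral and so has jumps, which makes that step delicate; the Jensen reduction avoids the running minimum entirely. Both arguments ultimately rest on the same unproved technical ingredient, namely exponential (negative-moment) integrability of the jump part $\exp\bigl(-2p\int\!\!\int\log(1+\partial_{x}c)\tilde{N}(ds,dz)\bigr)$, which requires $1+\partial_{x}c$ to stay bounded away from zero --- you flag this explicitly as an implicit nondegeneracy assumption, and the paper uses it silently when it writes $\log(1+\partial_{x}c)$ and estimates $\log(1+x)-x$ from below by $-Lx^{2}$. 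So the proposal is sound modulo the same hypothesis the paper itself leaves tacit; if you fill in the negative-moment bound for $\Phi_{r,t}$ (exponential supermartingale argument for the Brownian part, and the exponential formula for Poisson integrals for the jump part, using the finiteness of $\nu$ and $\int_{\mathbb{R}_{0}}|\partial_{x}c|^{p}\nu(dz)\leq K$), the proof is complete.
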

\begin{proof}
Note that equation (\ref{equ:3.27}) is linear with respect to $D_{r}^{B}X_{t}$, by It${\rm \hat{o}}$'s formula~(see e.g. Theorem 9.5.2 in~\cite{N}), for any $0\leq r\leq t\leq T$, we have
\begin{eqnarray}\label{equ:3.52}
D_{r}^{B}X_{t}&=&\sigma(r,X_{r})\exp\Big(\int_{r}^{t}(b'(s,X_{s})-\frac{1}{2}(\sigma'(s,X_{s}))^{2})ds+\int_{r}^{t}\sigma'(s,X_{s})dB_{s}\nonumber\\
&&+\int_{r}^{t}\int_{\mathbb{R}_{0}}\log(1+\partial_{x}c(X_{s-},z))\tilde{N}(ds,dz)\nonumber\\
&&+\int_{r}^{t}\int_{\mathbb{R}_{0}}(\log(1+\partial_{x}c(X_{s-},z))-\partial_{x}c(X_{s-},z))\nu(dz)ds\Big),
\end{eqnarray}
and
\begin{eqnarray}\label{equ:3.53}
D_{r,\xi}^{N}X_{t}&=&\partial_{z}c(X_{r},\xi)\exp\Big(\int_{r}^{t}\sigma'(s,X_{s})dB_{s}+\int_{r}^{t}(b'(s,X_{s})-\frac{1}{2}(\sigma'(s,X_{s}))^{2})ds\nonumber\\
&&+\int_{r}^{t}\int_{\mathbb{R}_{0}}\log(1+\partial_{x}c(X_{s-},z))\tilde{N}(ds,dz)\nonumber\\
&&+\int_{r}^{t}\int_{\mathbb{R}_{0}}(\log(1+\partial_{x}c(X_{s-},z))-\partial_{x}c(X_{s-},z))\nu(dz)ds\Big).
\end{eqnarray}
By (\ref{equ:3.52}), we have
\begin{eqnarray}\label{equ:3.54}
&&|D_{r}^{B}X_{t}|^{2}\geq \|\sigma\|_{0}^{2}\exp\Big(\int_{r}^{t}(2b'(s,X_{s})-(\sigma'(s,X_{s}))^{2})ds\nonumber
\\
&&+2\int_{r}^{t}\int_{\mathbb{R}_{0}}(\log(1+\partial_{x}c(X_{s-},z))-\partial_{x}c(X_{s-},z))\nu(dz)ds\Big)\cdot\exp\Big(2\int_{r}^{t}\sigma'(s,X_{s})dB_{s}\Big)\nonumber\\
&&\cdot\exp\Big(2\int_{r}^{t}\int_{\mathbb{R}_{0}}\log(1+\partial_{x}c(X_{s-},z))\tilde{N}(ds,dz)\Big)
\end{eqnarray}
Note that the fact: $\log(1+x)-x\leq Lx^{2}$, here $L>0$ is a constant, and together with $(\mathbf{H_{2}})$, we obtain
\begin{eqnarray}\label{equ:3.55}
\int_{r}^{t}\int_{\mathbb{R}_{0}}(\log(1+\partial_{x}c(X_{s-},z))-\partial_{x}c(X_{s-},z))\nu(dz)ds&\leq&L\int_{r}^{t}\int_{\mathbb{R}_{0}}|\partial_{x}c(X_{s-},z)|^{2}\nu(dz)ds\nonumber\\
&\leq& LKT.
\end{eqnarray}
Then by (\ref{equ:3.54}) and (\ref{equ:3.55}), we have
\begin{eqnarray*}
|D_{r}^{B}X_{t}|^{2}&\geq&\|\sigma\|_{0}^{2}e^{(-2K-K^{2})T}e^{-2LKT}\exp\Big(2\int_{r}^{t}\sigma'(s,X_{s})dB_{s}\nonumber\\&&+2\int_{r}^{t}\int_{\mathbb{R}_{0}}\log(1+\partial_{x}c(X_{s-},z))\tilde{N}(ds,dz)\Big).
\end{eqnarray*}
Define $M_{t}=2\int_{0}^{t}\sigma'(s,X_{s})dB_{s}+2\int_{0}^{t}\int_{\mathbb{R}_{0}}\log(1+\partial_{x}c(X_{s-},z))\tilde{N}(ds,dz)$,
then
\begin{eqnarray*}
\|D^{B}X_{t}\|^{2}&\geq&\|\sigma\|_{0}^{2}e^{(-2K-K^{2})T}e^{-2LKT}\int_{0}^{t}e^{2M_{t}-2M_{r}}dr\nonumber\\
&\geq&\|\sigma\|_{0}^{2}e^{(-2K-K^{2})T}e^{-2LKT}e^{4\min_{0\leq t\leq T}M_{t}}\cdot t.
\end{eqnarray*}
Note that $M_{t}$ is a martingale with bounded quadratic variation. In fact, 
\begin{eqnarray*}
\langle M\rangle_{t}=\int_{0}^{t}|\sigma'(s,X_{s})|^{2}ds+\int_{0}^{t}\int_{\mathbb{R}_{0}}|\log(1+\partial_{x}c(X_{s-},z))|^{2}N(dz,ds),
\end{eqnarray*}
and
\begin{eqnarray*}
\mathbb{E}\langle M\rangle_{t}&\leq& L^{2}T+\mathbb{E}\int_{0}^{t}\int_{\mathbb{R}_{0}}|\partial_{x}c(X_{s-},z)+L(\partial_{x}c(X_{s-},z))^{2}|^{2}\nu(dz)ds\nonumber\\
&\leq& (L^{2}+2K+2KL^{2})T.
\end{eqnarray*}
Then
\begin{eqnarray*}
\langle M\rangle_{t}\leq (L^{2}+2K+2KL^{2})T,\quad a.s., 0\leq t\leq T.
\end{eqnarray*}
By Dambis-Dubins-Schwart's theorem~\cite{LG} (see the Theorem 5.13), there exists a unique one-dimensional Brownian motion $\{\beta_{t}\}_{t\geq 0}$ such that $M_{t}=\beta_{\langle M\rangle_{t}}$ and for $0\leq t\leq T$, a.s.
\begin{eqnarray*}
\|D^{B}X_{t}\|^{2}
&\geq&\|\sigma\|_{0}^{2}e^{(-2K-K^{2})T}e^{-2LKT}\cdot t\cdot e^{4\min_{0\leq t\leq(L^{2}+2K+2KL^{2})T}\beta_{t}},
\end{eqnarray*}
and
\begin{eqnarray*}
\mathbb{E}\Big[\frac{1}{\|D^{B}X_{t}\|^{2p}}\Big]\leq \frac{\|\sigma\|_{0}^{-2p}e^{p(2K+K^{2}+L)T}}{t^{p}}\mathbb{E}[e^{4p\max_{0\leq t\leq (L^{2}+2K+2KL^{2})T}(-\beta_{t})}].
\end{eqnarray*}
By the Fernique theorem, we have $\mathbb{E}[e^{4p\max_{0\leq t\leq (L^{2}+2K+2KL^{2})T}(-\beta_{t})}]< \infty$. Carrying out a similar proof and together with (\ref{equ:3.53}), (\ref{equ:b3.53}) can be proved.
\end{proof}
Next we state our main result.
\begin{theorem}
Under assumptions $(\mathbf{H_{1}})$ and $(\mathbf{H_{2}})$, and  $\|\sigma\|_{0}=\\
\inf_{(t,x)\in[0,T]\times\mathbb{R}}|\sigma(t,x)|>0$, then for any $0\leq t\leq T$,
\begin{eqnarray*}\label{MR}
\sup_{x\in\mathbb{R}}|\mathbb{P}(X_{t}^{\epsilon}\leq x)-\mathbb{P}(X_{t}\leq x)|\leq \frac{C}{t}\sqrt{\epsilon},\quad \epsilon\in(0,1).
\end{eqnarray*}
\end{theorem}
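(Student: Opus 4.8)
The plan is to estimate, for fixed $x$, the quantity $\mathbb{E}[\varphi_\delta(X_t^{\epsilon})]-\mathbb{E}[\varphi_\delta(X_t)]$ for smooth functions $\varphi_\delta$ squeezed between $\mathbf 1_{(-\infty,x]}$ and $\mathbf 1_{(-\infty,x+\delta]}$, transferring the derivative of $\varphi_\delta$ onto a Malliavin weight \emph{built solely from $X_t$}; the point is that all error terms will be controlled independently of $\delta$, so that one may let $\delta\downarrow0$ at the end. Only the Brownian Malliavin calculus is used, consistently with the hypothesis $\|\sigma\|_0>0$. By Lemma \ref{lem:3.3} we have $\mathbb{E}\|D^{B}X_t\|^{-2p}\le Ct^{-p}$ for all $p$, and by $(\mathbf{H_{2}})$ the solution of (\ref{equ:3.4}) belongs to $\mathbb{D}^{2,p}$ with $\mathbb{E}\|D^{2}X_t\|^{p}\le C$ (a Gronwall estimate as in the proof of Lemma \ref{Le}). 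Hence $u_t:=D^{B}X_t/\|D^{B}X_t\|^{2}$ lies in the domain of $\delta^{B}$, the law of $X_t$ has no atoms, and, using $\mathbb{E}|\delta^{B}(u)|^{2}\le\mathbb{E}\|u\|^{2}+\mathbb{E}\|Du\|^{2}$ together with Cauchy--Schwarz, the weight $\gamma_t:=\delta^{B}(u_t)$ obeys
\[
\mathbb{E}|\gamma_t|^{2}\ \le\ \mathbb{E}\|D^{B}X_t\|^{-2}+C\,\mathbb{E}\big[\|D^{2}X_t\|^{2}\|D^{B}X_t\|^{-4}\big]\ \le\ \frac{C}{t}+\frac{C}{t^{2}}\ \le\ \frac{C}{t^{2}},\qquad 0<t\le T .
\]
Moreover, for every $Y\in\mathbb{D}^{1,2}$ and every Lipschitz $\psi\in C^{1}(\mathbb{R})$, the duality between $\delta^{B}$ and $D^{B}$ gives the integration-by-parts identity $\mathbb{E}[\psi(Y)\gamma_t]=\mathbb{E}\big[\psi'(Y)\,\langle D^{B}Y,D^{B}X_t\rangle/\|D^{B}X_t\|^{2}\big]$.

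Fix $x$ and $\delta>0$, pick $\varphi_\delta\in C_b^{\infty}(\mathbb{R})$ with $\mathbf 1_{(-\infty,x]}\le\varphi_\delta\le\mathbf 1_{(-\infty,x+\delta]}$, and set $\Phi_\delta(y):=\int_{x+\delta}^{y}\varphi_\delta(s)\,ds$, which is $1$-Lipschitz with $\Phi_\delta'=\varphi_\delta$. Applying the identity above with $\psi=\Phi_\delta$ to $Y=X_t$ (the ``correct'' weight, so $\langle D^{B}X_t,D^{B}X_t\rangle/\|D^{B}X_t\|^{2}=1$) and to $Y=X_t^{\epsilon}$ yields $\mathbb{E}[\varphi_\delta(X_t)]=\mathbb{E}[\Phi_\delta(X_t)\gamma_t]$ and $\mathbb{E}[\Phi_\delta(X_t^{\epsilon})\gamma_t]=\mathbb{E}\big[\varphi_\delta(X_t^{\epsilon})\langle D^{B}X_t^{\epsilon},D^{B}X_t\rangle/\|D^{B}X_t\|^{2}\big]$, whence
\[
\mathbb{E}[\varphi_\delta(X_t^{\epsilon})]-\mathbb{E}[\varphi_\delta(X_t)]
=\mathbb{E}\Big[\varphi_\delta(X_t^{\epsilon})\,\frac{\langle D^{B}X_t-D^{B}X_t^{\epsilon},\,D^{B}X_t\rangle}{\|D^{B}X_t\|^{2}}\Big]
+\mathbb{E}\big[\big(\Phi_\delta(X_t^{\epsilon})-\Phi_\delta(X_t)\big)\gamma_t\big].
\]
Since $0\le\varphi_\delta\le1$ and $\Phi_\delta$ is $1$-Lipschitz, Cauchy--Schwarz and the bound on $\mathbb{E}|\gamma_t|^{2}$ give
\[
\big|\mathbb{E}[\varphi_\delta(X_t^{\epsilon})]-\mathbb{E}[\varphi_\delta(X_t)]\big|
\le \big(\mathbb{E}\|D^{B}X_t^{\epsilon}-D^{B}X_t\|^{2}\big)^{\frac12}\big(\mathbb{E}\|D^{B}X_t\|^{-2}\big)^{\frac12}
+\big(\mathbb{E}|X_t^{\epsilon}-X_t|^{2}\big)^{\frac12}\big(\mathbb{E}|\gamma_t|^{2}\big)^{\frac12}.
\]
By Lemma \ref{lem:3.1} ($\mathbb{E}|X_t^{\epsilon}-X_t|^{2}\le C\epsilon$), Lemma \ref{Le} ($\mathbb{E}\|D^{B}X_t^{\epsilon}-D^{B}X_t\|^{2}\le C\epsilon$) and the first paragraph, the right-hand side is $\le C t^{-1}\sqrt{\epsilon}$ for $0<t\le T$, \emph{uniformly in $\delta>0$ and $x$}.

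It remains to remove the smoothing. From $\mathbf 1_{(-\infty,x]}\le\varphi_\delta$ we get $\mathbb{P}(X_t^{\epsilon}\le x)\le\mathbb{E}[\varphi_\delta(X_t^{\epsilon})]\le\mathbb{E}[\varphi_\delta(X_t)]+Ct^{-1}\sqrt{\epsilon}$, and since $\varphi_\delta\downarrow\mathbf 1_{(-\infty,x]}$ pointwise and $X_t$ has no atom at $x$, dominated convergence gives $\mathbb{E}[\varphi_\delta(X_t)]\to\mathbb{P}(X_t\le x)$ as $\delta\downarrow0$; thus $\mathbb{P}(X_t^{\epsilon}\le x)-\mathbb{P}(X_t\le x)\le Ct^{-1}\sqrt{\epsilon}$. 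The reverse inequality is obtained identically with test functions between $\mathbf 1_{(-\infty,x-\delta]}$ and $\mathbf 1_{(-\infty,x]}$, and taking $\sup_x$ finishes the proof.

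The crux is to avoid any term of size $\|\varphi_\delta'\|_{\infty}\,\mathbb{E}|X_t^{\epsilon}-X_t|$, which — combined with a density bound for $X_t$ — would only yield the rate $\epsilon^{1/4}$: this is why one integrates by parts against the weight $\gamma_t$ \emph{of $X_t$} rather than against an interpolant of $X_t$ and $X_t^{\epsilon}$ (whose Malliavin matrix need not be invertible, so that its inverse moments are not available), and why one keeps the $1$-Lipschitz primitive $\Phi_\delta$ inside the expectations. The two resulting errors are then precisely controlled by $\|X_t^{\epsilon}-X_t\|_{L^{2}}$, $\|D^{B}X_t^{\epsilon}-D^{B}X_t\|_{L^{2}}$ and the negative moments of $\|D^{B}X_t\|$, i.e.\ by Lemmas \ref{lem:3.1}, \ref{Le} and \ref{lem:3.3}; the remaining points — justifying the integration-by-parts identities, the estimate $\mathbb{E}|\gamma_t|^{2}\le Ct^{-2}$, and the $\delta\downarrow0$ limit — are routine.
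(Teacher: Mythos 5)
Your proposal is correct and follows essentially the same route as the paper: the same two-term decomposition (one term controlled by $\mathbb{E}\|D X_t^{\epsilon}-DX_t\|^{2}\le C\epsilon$ times the negative moment $\mathbb{E}\|DX_t\|^{-2}\le Ct^{-1}$, the other by $\mathbb{E}|X_t^{\epsilon}-X_t|^{2}\le C\epsilon$ times the second moment of the Skorohod weight $\delta(DX_t/\|DX_t\|^{2})$, itself bounded by $Ct^{-2}$ via the second Malliavin derivative and Lemma \ref{lem:3.3}). The only differences are cosmetic: you carry out the argument on the Brownian component $D^{B}$ (the paper details the Poisson component $D^{N}$ and declares the Brownian case similar), and you make explicit the mollification $\varphi_\delta$ and the limit $\delta\downarrow0$ that the paper compresses into ``a standard approximation procedure.''
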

\begin{proof}
Let's just prove the case of $D^{N}X_{t}$ and the case of $D^{B}X_{t}$ is similar.
Assume that $\psi$ is a nonnegative smooth function with compact support, let $\phi(y)=\int_{-\infty}^{y}\psi(z)dz$, then by chain rules~\cite{N} (see the Theorem 3.3.2 and Proposition 11.2.2), if $F\in \mathbb{D}^{2}$, then $\phi(F)\in\mathbb{D}^{2}$, and the product of its derivative  $D\phi(F)$ and $D_{r,\xi}^{N}X_{t}$ is given
\begin{eqnarray*}
\langle D^{N}\phi(F),D^{N}X_{t}\rangle_{L^{2}([0,T]\times\mathbb{R}_{0})}=\psi(F)\langle D^{N}F,D^{N}X_{t}\rangle_{L^{2}([0,T]\times\mathbb{R}_{0})},\quad 0\leq t\leq T.
\end{eqnarray*}
In the following, for simplicity, we write $\langle\cdot,\cdot\rangle$ instead of $\langle\cdot,\cdot\rangle_{L^{2}([0,T]\times\mathbb{R}_{0})}$ and $\|\cdot\|$ instead of $\|\cdot\|_{L^{2}([0,T]\times\mathbb{R}_{0})}$.
For fixed $x\in\mathbb{R}$, by a standard approximation procedure, we derive that the above result is valid for $\psi(z)=1_{(-\infty,x]}(z)$. Choose $F=X_{t}^{\epsilon}$ and $F=X_{t}, 0\leq t\leq T$, we have
\begin{eqnarray*}
\Big\langle D^{N}\int_{-\infty}^{X_{t}^{\epsilon}}1_{(-\infty,x]}(z)dz,D^{N}X_{t}\Big\rangle&=&1_{(-\infty,x]}(X_{t}^{\epsilon})\langle D^{N}X_{t}^{\epsilon},D^{N}X_{t}\rangle,\nonumber\\
\Big\langle D^{N}\int_{-\infty}^{X_{t}}1_{(-\infty,x]}(z)dz,D^{N}X_{t}\Big\rangle&=&1_{(-\infty,x]}(X_{t})\langle D^{N}X_{t},D^{N}X_{t}\rangle.
\end{eqnarray*}
Then
\begin{eqnarray}
&&\Big\langle D^{N}\int_{X_{t}}^{X_{t}^{\epsilon}}1_{(-\infty,x]}(z)dz,D^{N}X_{t}\Big\rangle\nonumber\\
&&=1_{(-\infty,x]}(X_{t}^{\epsilon})\langle D^{N}X_{t}^{\epsilon},D^{N}X_{t}\rangle-1_{(-\infty,x]}(X_{t})\langle D^{N}X_{t},D^{N}X_{t}\rangle\nonumber\\
&&=(1_{(-\infty,x]}(X_{t}^{\epsilon})-1_{(-\infty,x]}(X_{t}))\|D^{N}X_{t}\|^{2}+1_{(-\infty,x]}(X_{t}^{\epsilon})\langle D^{N}X_{t}^{\epsilon}-D^{N}X_{t},D^{N}X_{t}\rangle,\nonumber
\end{eqnarray}
and
\begin{eqnarray*}
1_{(-\infty,x]}(X_{t}^{\epsilon})-1_{(-\infty,x]}(X_{t})&=&\frac{\langle D^{N}\int_{X_{t}}^{X_{t}^{\epsilon}}1_{(-\infty,x]}(z)dz,D^{N}X_{t}\rangle}{\|D^{N}X_{t}\|^{2}}\\
&-&\frac{1_{(-\infty,x]}(X_{t}^{\epsilon})\langle D^{N}X_{t}^{\epsilon}-D^{N}X_{t},D^{N}X_{t}\rangle}{\|D^{N}X_{t}\|^{2}},
\end{eqnarray*}
taking expectation,
\begin{eqnarray*}
&&\mathbb{P}(X_{t}^{\epsilon}\leq x)-\mathbb{P}(X_{t}\leq x)\\
&=&\mathbb{E}\Big[\frac{\langle D^{N}\int_{X_{t}}^{X_{t}^{\epsilon}}1_{(-\infty,x]}(z)dz,D^{N}X_{t}\rangle}{\|D^{N}X_{t}\|^{2}}\Big]-\mathbb{E}\Big[\frac{1_{(-\infty,x]}(X_{t}^{\epsilon})\langle D^{N}X_{t}^{\epsilon}-D^{N}X_{t},D^{N}X_{t}\rangle}{\|D^{N}X_{t}\|^{2}}\Big]\\
&=&\mathbb{E}\Big[\int_{X_{t}}^{X_{t}^{\epsilon}}1_{(-\infty,x]}(z)dz\cdot\delta\Big(\frac{D^{N}X_{t}}{\|D^{N}X_{t}\|^{2}}\Big)\Big]-\mathbb{E}\Big[\frac{1_{(-\infty,x]}(X_{t}^{\epsilon})\langle D^{N}X_{t}^{\epsilon}-D^{N}X_{t},D^{N}X_{t}\rangle}{\|D^{N}X_{t}\|^{2}}\Big],
\end{eqnarray*} 
here the second equality is valid by~\cite{N} (see the Proposition 10.2.2). By H${\rm\ddot{o}}$lder's inequality, we have
\begin{eqnarray*}
&&\sup_{x\in\mathbb{R}}|\mathbb{P}(X_{t}^{\epsilon}\leq x)-\mathbb{P}(X_{t}\leq x)|\\
&\leq& \mathbb{E}\Big|(X_{t}^{\epsilon}-X_{t})\delta\Big(\frac{D^{N}X_{t}}{\|D^{N}X_{t}\|^{2}}\Big)\Big|+\mathbb{E}\Big|\frac{1_{(-\infty,x]}(X_{t}^{\epsilon})\langle D^{N}X_{t}^{\epsilon}-D^{N}X_{t},DX_{t}\rangle}{\|D^{N}X_{t}\|^{2}}\Big|\\
&\leq&(\mathbb{E}|X_{t}^{\epsilon}-X_{t}|^{2})^{\frac{1}{2}}\Big(\mathbb{E}\delta\Big(\frac{D^{N}X_{t}}{\|D^{N}X_{t}\|^{2}}\Big)^{2}\Big)^{\frac{1}{2}}+\mathbb{E}\Big|\frac{\|D^{N}X_{t}^{\epsilon}-D^{N}X_{t}\|}{\|D^{N}X_{t}\|}\Big|\\
&\leq&(\mathbb{E}|X_{t}^{\epsilon}-X_{t}|^{2})^{\frac{1}{2}}\Big(\mathbb{E}\delta\Big(\frac{D^{N}X_{t}}{\|D^{N}X_{t}\|^{2}}\Big)^{2}\Big)^{\frac{1}{2}}+(\mathbb{E}\|D^{N}X_{t}^{\epsilon}-D^{N}X_{t}\|^{2})^{\frac{1}{2}}\Big(\mathbb{E}\frac{1}{\|D^{N}X_{t}\|^{2}}\Big)^{\frac{1}{2}}.\\
\end{eqnarray*}
By (\ref{equ:3.34}), we have
\begin{eqnarray*}
\sup_{x\in\mathbb{R}}|\mathbb{P}(X_{t}^{\epsilon}\leq x)-\mathbb{P}(X_{t}\leq x)|&\leq&C\sqrt{\epsilon}\Big[\Big(\mathbb{E}\delta\Big(\frac{D^{N}X_{t}}{\|D^{N}X_{t}\|^{2}}\Big)^{2}\Big)^{\frac{1}{2}}+\Big(\mathbb{E}\frac{1}{\|D^{N}X_{t}\|^{2}}\Big)^{\frac{1}{2}}\Big].
\end{eqnarray*}
Further, by Lemma \ref{lem:3.3},
\begin{eqnarray}
\mathbb{E}\Big[\frac{1}{\|D^{N}X_{t}\|^{2}}\Big]\leq C\frac{1}{t}.
\end{eqnarray}
On the other hand, let $u(r,\xi)=\frac{D_{r,\xi}^{N}X_{t}}{\|D^{N}X_{t}\|^{2}},0\leq r\leq t$, then by~\cite{AL} (see the Proposition 3.22) or~\cite{SUV} (see the Section 6),
\begin{eqnarray*}
&&\mathbb{E}\delta\Big(\frac{D^{N}X_{t}}{\|D^{N}X_{t}\|^{2}}\Big)^{2}=\mathbb{E}[\delta(u(r,\xi))^{2}]\\
&\leq& \int_{0}^{t}\int_{\mathbb{R}_{0}}\mathbb{E}|u(r,\xi)|^{2}\nu(d\xi)dr+\int_{0}^{t}\int_{0}^{t}\int_{\mathbb{R}_{0}}\int_{\mathbb{R}_{0}}\mathbb{E}|D_{s,\eta}^{N}u(r,\xi)|^{2}\nu(d\xi)\nu(d\eta)dsdr\\
&\leq&\mathbb{E}\Big[\frac{1}{\|D^{N}X_{t}\|^{2}}\Big]+\int_{0}^{t}\int_{0}^{t}\int_{\mathbb{R}_{0}}\int_{\mathbb{R}_{0}}\mathbb{E}|D_{s,\eta}^{N}u(r,\xi)|^{2}\nu(d\xi)\nu(d\eta)dsdr\\
&\leq&C t^{-1}+\int_{0}^{t}\int_{0}^{t}\int_{\mathbb{R}_{0}}\int_{\mathbb{R}_{0}}\mathbb{E}|D_{s,\eta}^{N}u(r,\xi)|^{2}\nu(d\xi)\nu(d\eta)dsdr.
\end{eqnarray*}
By the chain rules of Malliavin derivative~\cite{N} (see the Proposition 11.2.2), for $0\leq s\leq t$ and $\eta\in\mathbb{R}_{0}$,
\begin{eqnarray}
D_{s,\eta}^{N}u(r,\xi)=\frac{D_{s,\eta}^{N}D_{r,\xi}^{N}X_{t}}{\|D^{N}X_{t}\|^{2}}-2\frac{D_{r,\xi}^{N}X_{t}\langle D_{r,\xi}^{N}X_{t},D_{s,\eta}^{N}D_{r,\xi}^{N}X_{t}\rangle}{\|D^{N}X_{t}\|^{2}},
\end{eqnarray}
then by the H${\rm \ddot{o}}$lder inequality and Lemma \ref{lem:3.3}, we have
\begin{eqnarray}\label{Dd}
&&\int_{0}^{t}\int_{0}^{t}\int_{\mathbb{R}_{0}}\int_{\mathbb{R}_{0}}\mathbb{E}|D_{s,\eta}^{N}u(r,\xi)|^{2}\nu(d\xi)\nu(d\eta)dsdr\nonumber\\
&\leq& 2\mathbb{E}\frac{\int_{0}^{t}\int_{0}^{t}\int_{\mathbb{R}_{0}}\int_{\mathbb{R}_{0}}
|D_{s,\eta}^{N}D_{r,\xi}^{N}X_{t}|^{2}\nu(d\xi)\nu(d\eta)dsdr}{\|D^{N}X_{t}\|^{4}}+8\mathbb{E}\frac{\int_{0}^{t}\int_{0}^{t}\int_{\mathbb{R}_{0}}\int_{\mathbb{R}_{0}}|D_{s,\eta}^{N}D_{r,\xi}^{N}X_{t}|^{2}\nu(d\xi)\nu(d\eta)dsdr}{\|D^{N}X_{t}\|^{4}}\nonumber\\
&\leq&10\Big(\mathbb{E}\Big|\int_{0}^{t}\int_{0}^{t}\int_{\mathbb{R}_{0}}\int_{\mathbb{R}_{0}}|D_{s,\eta}^{N}D_{r,\xi}^{N}X_{t}|^{2}d\theta dr\Big|^{2}\Big)^{\frac{1}{2}}\Big(\mathbb{E}\Big[\frac{1}{\|D^{N}X_{t}\|^{8}}\Big]\Big)^{\frac{1}{2}}\nonumber\\
&\leq&Ct^{-2}\Big(\int_{0}^{t}\int_{0}^{t}\int_{\mathbb{R}_{0}}\int_{\mathbb{R}_{0}}\mathbb{E}|D_{s,\eta}^{N}D_{r,\xi}^{N}X_{t}|^{4}\nu(d\xi)\nu(d\eta)dsdr\Big)^{\frac{1}{2}}.
\end{eqnarray}
By (\ref{equ:3.52}), for any $0\leq s,r \leq t$ and $\xi, \eta\in \mathbb{R}_{0}$, we have
\begin{eqnarray*}
&&D_{s,\eta}^{N}D_{r,\xi}^{N}X_{t}\\
&&=\partial_{x}\partial_{z}c(X_{r},\xi)D_{s,\eta}^{N}X_{r}+\int_{s\vee r}^{t}[b''(v,X_{v})D_{r,\xi}^{N}X_{v}D_{s,\eta}^{N}X_{v}+b'(v,X_{v})D_{s,\eta}^{N}D_{r,\xi}^{N}X_{v}]dv\\
&&+\int_{s\vee r}^{t}[\sigma''(v,X_{v})D_{r,\xi}^{N}X_{v}D_{s,\eta}^{N}X_{v}+\sigma'(v,X_{v})D_{s,\eta}^{N}D_{r,\xi}^{N}X_{v}]dB_{v}+\partial_{z}\partial_{x}c(X_{s},\eta)D_{r,\xi}^{N}X_{s}\\
&&+\int_{s\vee r}^{t}\int_{\mathbb{R}_{0}}[\partial^{2}_{x}c(X_{v-},z)D_{s,\eta}^{N}X_{v}D_{r,\xi}^{N}X_{v}+\partial_{x}c(X_{v-},z)D_{r,\xi}^{N}D_{s,\eta}^{N}X_{v-}]\tilde{N}(dv,dz)\\
\end{eqnarray*}
Note that for any $p\geq2$, by a similar proof to (\ref{equ:3.33}), we have $\mathbb{E}|D_{r,\xi}^{N}X_{t}|^{p}\leq C, 0\leq r\leq t\leq T$. By the boundedness of $b',b'',\sigma',\sigma''$ and $(\mathbf{H_{2}})$,
\begin{eqnarray}
\mathbb{E}|D_{s,\eta}^{N}D_{r,\xi}^{N}X_{t}|^{4}\leq C+\int_{s\vee r}^{t}\mathbb{E}|D_{s,\eta}^{N}D_{r,\xi}^{N}X_{v}|^{4}dv,
\end{eqnarray}
Gronwall's inequality yields
\begin{eqnarray}
\mathbb{E}|D_{s,\eta}^{N}D_{r,\xi}^{N}X_{t}|^{4}\leq C_{T}, \quad0\leq s,r\leq t\leq T.
\end{eqnarray}
Then by (\ref{Dd}), we have
\begin{eqnarray}
\int_{0}^{t}\int_{0}^{t}\int_{\mathbb{R}_{0}}\int_{\mathbb{R}_{0}}\mathbb{E}|D_{s,\eta}^{N}u(r,\xi)|^{2}\nu(d\xi)\nu(d\eta)dsdr\leq Ct^{-2}.
\end{eqnarray}
and 
\begin{eqnarray*}
\sup_{x\in\mathbb{R}}|\mathbb{P}(X_{t}^{\epsilon}\leq x)-\mathbb{P}(X_{t}\leq x)|&\leq&C\sqrt{\epsilon}(t^{-\frac{1}{2}}+t^{-1})\leq Ct^{-1}\sqrt{\epsilon}.
\end{eqnarray*}
\end{proof}

\end{document}